\documentclass[a4paper, 12pt]{amsproc}
\allowdisplaybreaks

\usepackage{fullpage}
\usepackage{amsmath, amsthm, amssymb, mathtools, mathrsfs}
\usepackage{algorithm}
\usepackage{algpseudocode}
\usepackage{caption}
\usepackage{mdframed}
\usepackage{tcolorbox}
\tcbuselibrary{skins,breakable}

\usepackage{bm}
\usepackage{forest}

\usepackage{chessfss}
\usepackage{mathtools}
\usepackage{tikz}
\usetikzlibrary{intersections, calc, arrows.meta}

\usepackage{pgfplots}

\usepackage{graphicx}
\usepackage{here}
\usepackage{time}
\usepackage{bbm}

\usepackage{xcolor}
\usepackage{hyperref}
\usepackage[capitalize,nameinlink,noabbrev,nosort]{cleveref}
\hypersetup{
	colorlinks=true, 
	linkcolor=brown, 
	citecolor=brown, 
	filecolor=brown, 
	urlcolor=brown, 
}
\makeatletter
\@namedef{subjclassname@2020}{%
 \textup{2020} Mathematics Subject Classification}
\makeatother

\newtheorem{theoremcounter}{Theorem Counter}[section]

\theoremstyle{definition}
\newtheorem{definition}[theoremcounter]{Definition}

\newtheorem{example}[theoremcounter]{Example}

\theoremstyle{plain}
\newtheorem{lemma}[theoremcounter]{Lemma}
\newtheorem{proposition}[theoremcounter]{Proposition}
\newtheorem{corollary}[theoremcounter]{Corollary}

\newtheorem{theorem}[theoremcounter]{Theorem}

\newtheorem{open}[theoremcounter]{Open problem}

\numberwithin{equation}{section}

\makeatletter
\renewenvironment{proof}[1][\proofname]{%
  \par\pushQED{\qed}%
  \normalfont\topsep6pt \trivlist
  \item[\hskip\labelsep \textbf{#1.}]\ignorespaces 
}{%
  \popQED\endtrivlist\@endpefalse
}
\makeatother

\newtcolorbox{breakalg}[2][]{%
  breakable,
  enhanced,
  sharp corners,
  colframe=black,
  boxrule=0pt,
  top=5pt,
  bottom=5pt,
  title={\textbf{Algorithm~#2}},
  fonttitle=\bfseries,
  colback=white,
  overlay unbroken={
    \draw[black] (frame.north west) -- (frame.north east);
    \draw[black] (title.south west) -- (title.south east);
    \draw[black] (frame.south west) -- (frame.south east);
  },
  overlay first={
    \draw[black] (frame.north west) -- (frame.north east);
    \draw[black] (title.south west) -- (title.south east);
  },
  overlay middle={
  },
  overlay last={
    \draw[black] (frame.south west) -- (frame.south east);
  },
  #1
}

\makeatletter
\tagsleft@false
\makeatother

\newcommand{\N}{\mathbb{N}}
\newcommand{\Z}{\mathbb{Z}}

\newcommand{\C}{\mathbb{C}}
\newcommand{\M}{\mathbb{M}}

\newcommand{\Pri}{\mathbb{P}}
\newcommand{\mA}{\mathcal{A}}
\newcommand{\mB}{\mathcal{B}}
\newcommand{\mC}{\mathcal{C}}
\newcommand{\mS}{\mathcal{S}}
\newcommand{\mM}{\mathcal{M}}
\newcommand{\mE}{\mathcal{E}}
\newcommand{\mQ}{\mathcal{Q}}

\newcommand{\bo}{\mathbf{1}}

\newcommand{\SL}{\operatorname{SL}}
\newcommand{\rk}{\operatorname{rk}}

\newcommand{\APT}{\operatorname{APT}}

\newcommand{\Prim}{\operatorname{Prim}}

\begin{document}

\title[]{On the finiteness of prime trees and their relation to modular forms} 

\author{Yusuke Fujiyoshi} 
\address{Joint Graduate School of Mathematics for Innovation, Kyushu University,
Motooka 744, Nishi-ku, Fukuoka 819-0395, Japan}
\email{fujiyoshi.yusuke.671@s.kyushu-u.ac.jp}


\maketitle

\begin{abstract}
In this paper, we introduce the prime trees associated with a finite subset $P$ of the set of all prime numbers, and provide conditions under which the tree is of finite type. Moreover, we compute the density of finite-type subsets $P$. As an application, we show that for weight $k \ge 2$ and levels $N = N'\prod_{p \in P} p^{a_p}$, where $N'$ is squarefree and $a_{p} \geq 2$, every cusp form $f \in \mS_k(\Gamma_0(N))$ can be expressed as a linear combination of products of two specific Eisenstein series whenever $P$ is of finite type.
\end{abstract}

\section{Introduction}
The space $\mM_{k}(\Gamma_{0}(N))$ of modular forms of weight $k$ and level $N$ decomposes as the direct sum of the Eisenstein subspace $\mE_{k}(\Gamma_{0}(N))$ and the cusp form subspace $\mS_{k}(\Gamma_{0}(N))$.\\

For $N = 1$, it is well known that every cusp form $f \in \mS_{k}(\SL_{2}(\Z))$ is a linear combination of products of the Eisenstein series $E_{4}$ and $E_{6}$ of weights $4$ and $6$, respectively. However, the number of such monomials in $E_4$ and $E_6$ required to express these linear combinations grows linearly with the weight $k$. Zagier \cite{Zagier} showed that, by using the Rankin–Selberg method, for $k \ge 8$, the space $\mM_k(\SL_2(\Z))$ is generated by the set of products $E_{\ell}E_{k-\ell}$ for $4 \le \ell \le k - 4$. Similar results are known to hold for $\mM_{k}(\Gamma_{0}(p))$
for prime $p$ and $k \ge 4$.
More precisely, Imamo\u{g}lu and Kohnen \cite{IK} treated the case $p = 2$,
while Kohnen and Martin \cite{KY} established analogous results for odd primes. Furthermore, Dickson and Neururer \cite{DN} proved that for weights $k \ge 4$ and levels of the form $N = p^{a}q^{b}N'$, where $p$ and $q$ are primes and $N'$ is squarefree, the space $\mM_k(\Gamma_0(N))$ is the sum of $\mE_k(\Gamma_0(N))$ and $\mQ_k(N)$, where $\mQ_{k}(N)$ is generated by the set of products of two
Eisenstein series with Dirichlet characters $E_{\ell}^{\phi,\psi}$.

\begin{theorem}[\cite{DN}]\label{thm1.1}
Let $k \ge 4$ be even, and let $N = p^{a}q^{b}N'$, where $p$ and $q$ are primes, $a,b \in \Z_{\ge 0}$, and $N'$ is squarefree. Then the restriction of the cuspidal projection to $\mQ_k(N)$ is surjective; that is,
\begin{align}\label{(1.1)}
    \mM_k(\Gamma_0(N)) = \mQ_k(N) + \mE_k(\Gamma_0(N)).
\end{align}
\end{theorem}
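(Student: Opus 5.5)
We would follow the Rankin--Selberg strategy of Zagier and Dickson--Neururer. Since the cuspidal projection is orthogonal projection onto $\mS_k(\Gamma_0(N))$ with respect to the Petersson inner product, it suffices to show that any cusp form $f\in\mS_k(\Gamma_0(N))$ orthogonal to every product $E_\ell^{\phi,\psi}E_{k-\ell}^{\phi',\psi'}$ in $\mQ_k(N)$ must vanish. By the theory of newforms and the decomposition $\mS_k(\Gamma_0(N))=\bigoplus_{M\mid N}\bigoplus_{d\mid N/M} f_{M}(d\tau)$, it is enough to treat newforms $f$ of level $M\mid N$ together with their oldform images. We would also note that $\mQ_k(N)$ is stable under the relevant operators $\tau\mapsto d\tau$ and Atkin--Lehner involutions up to twisting the characters.

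The first main step is the unfolding computation. For $f$ a Hecke eigenform we would compute $\langle f, E_\ell^{\phi,\psi}\,E_{k-\ell}^{\phi',\psi'}\rangle$ by unfolding against the Eisenstein series $E_{k-\ell}$ written as a sum over cosets of $\Gamma_\infty\backslash\Gamma_0(N)$, or via its expansion at an appropriate cusp. This produces, up to explicit nonzero Gamma factors and powers of $\pi$, the Rankin--Selberg value
\[
 L(f,k-1)\,L(f\otimes\chi,\ell)\quad\text{or more precisely}\quad L(f\otimes\phi, k-1)\,L(f\otimes\psi,\ell),
\]
with local Euler factors at primes dividing $N$ that depend on the level of the characters and on the cusp. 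The choice of which cusp to unfold at is forced by the characters, so we would work with all pairs $(\phi,\psi)$ of conductors dividing $N$ with $\phi\psi$ trivial (or of matching parity).

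The second step is non-vanishing: for $\ell$ large in the range $k/2<\ell\le k-2$ (and for weight $k\ge4$ where $\ell=k-1$ is in the region of absolute convergence), the Euler product shows $L(f\otimes\psi,\ell)\neq0$. Hence orthogonality to all these products forces the vanishing of certain linear combinations, indexed by the oldform embeddings $d\mid N/M$, of local factors. Writing $f$ as $\sum_d c_d f_M(d\tau)$, orthogonality becomes a linear system in the $c_d$ whose coefficients are products over primes $p\mid N/M$ of local expressions in the Hecke eigenvalues $a_p(f_M)$ and character values.

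The main obstacle is showing this linear system is nondegenerate, i.e.\ that the matrix of local factors is invertible. For squarefree parts this is a $2\times 2$ Vandermonde-type check per prime; for the prime powers $p^a,q^b$ one needs enough characters of $p$-power and $q$-power conductor to separate all $a+1$ (resp.\ $b+1$) embeddings, using twisting by characters of conductor $p^j$ and Gauss-sum identities. We would first factor the matrix as a tensor product over primes and check each local block, using characters mod $p^{a}$ to realize a discrete Fourier transform on the $d$-exponents. This is exactly where restricting to two non-squarefree primes matters: one character pair $(\phi,\psi)$ can carry independent ramification at only two primes, so that local blocks remain invertible. Once the system is invertible, all $c_d=0$, so $f=0$ and the equality \eqref{(1.1)} follows.
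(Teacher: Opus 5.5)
Your proposal has a genuine gap at the reduction step. You cannot assume that a cusp form orthogonal to $\mQ_k(N)$ has the form $\sum_d c_d\, f_M|B_d$ for a single newform $f_M$. That would require the orthogonal complement of $\mQ_k(N)$ to be Hecke-stable, and $\mQ_k(N)$, being spanned by products of Eisenstein series, is not Hecke-stable. The only operators you may diagonalize with are ones that preserve the space. In the paper these are the Atkin--Lehner involutions $W_S^N$, which is why $P_k(N)$ is built from the $|W_S^N$-images.

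So a general new element of the complement is $g=\sum_i\beta_i f_i$, with many distinct newforms $f_i$. Rankin--Selberg then gives only the linear relations $\sum_i\beta_i L(f_i,k-1)L((f_i)_\alpha,k-\ell)=0$ for $(\alpha,\ell)\in B(N)$. Knowing that each $L(f_i\otimes\psi,\ell)$ is nonzero (and absolute convergence needs $\ell>(k+1)/2$, not merely $\ell>k/2$) says nothing about whether these relations force $\beta=0$. What you need is that the vectors $\bigl(L((f_i)_\alpha,j+1)\bigr)_{\alpha,j}$ are linearly independent as $i$ varies.

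The paper, following Dickson--Neururer (\cref{thm4.4}), gets this independence by a specific trick. It sets $G=\sum_i\beta_i L(f_i,k-1)f_i$, so that the relations say every twisted critical value $L(G_\alpha,j+1)$ of the right parity vanishes; Prop.~3.6 of Dickson--Neururer recovers the excluded cases $\ell=2,k-2$. An injectivity theorem (\cref{thm4.2}) then gives $G=0$. Since $L(f_i,k-1)\neq 0$ for $k\ge 4$, every $\beta_i=0$.

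That injectivity theorem is the heart of the proof, and your plan has no substitute for it. It is an Eichler--Shimura/Manin-symbol argument, and it runs as follows.
\begin{itemize}
\item Twisted $L$-values determine $\xi^{\pm}_{G|W_N}(j;u,v)$ whenever $u$ is prime to every $p$ with $p^2\mid N$. By (\ref{(4.1)}) the same holds when $v$ has this property.
\item The three-term relation (\ref{(4.2)}) then propagates the vanishing to all of $E_N$.
\item Injectivity of $f\mapsto\xi_f^{\pm}$ gives $G=0$.
\end{itemize}
This propagation is exactly where the two-prime-power hypothesis enters. If $p\mid u$ and $q\mid v$ with $\gcd(u,v,N)=1$, then $u+v$ is prime to $pq$, so one application of (\ref{(4.2)}) suffices. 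With a third prime $r$ you can have $r\mid u+v$, and iterating this is precisely what the paper's prime trees track. Your explanation, that one character pair can carry ramification at only two primes, is not the reason. Your local Vandermonde/DFT blocks never touch this issue.

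Finally, the oldforms are not the main obstacle. $\mQ_k(N)$ already contains the $B_d$-lifts of lower-level products. Using $\mS_k(\Gamma_0(N))=\bigoplus_{N_0\mid N}\bigoplus_{d\mid N/N_0}\mS_k^{\mathrm{new}}(\Gamma_0(N_0))|B_d$ and the comparison of $\mQ_k(N)$ with $P_k(N)$, one reduces to surjectivity onto each new subspace.
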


The case of weight $2$ is different. We define the space $\mS_{2,\rk=0}(\Gamma_0(N))$ to be generated by newforms $f$ and their lifts with nonvanishing central $L$-value $L(f,1)$. Dickson and Neururer then proved an analogue of \cref{thm1.1}.

\begin{theorem}[\cite{DN}]\label{thm1.2}
Let $N$ and $\mQ_2(N)$ be as in \cref{thm1.1}. Then
\begin{align}\label{(1.2)}
    \mS_{2,\rk=0}(\Gamma_0(N)) \oplus \mE_2(\Gamma_0(N))
    = \mQ_2(N) + \mE_2(\Gamma_0(N)).
\end{align}
\end{theorem}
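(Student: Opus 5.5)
The plan is to follow the Rankin–Selberg strategy of Zagier and Dickson–Neururer, adapted to weight $2$. Take $f$ a newform of level $M \mid N$ and a lift $f|B_d$. Pair it, via the Petersson inner product, against products $E_{1}^{\phi,\psi}(z)\,E_{1}^{\phi',\psi'}(tz)$ of weight-one Eisenstein series with Dirichlet characters. Weight $2$ forces $\ell=1=k-\ell$. Unfolding one Eisenstein series gives
\begin{align*}
\langle f|B_d,\; E_1^{\phi,\psi}\cdot E_1^{\phi',\psi'}|B_t\rangle = C\cdot L(f,1)\,L(f\otimes\chi,1),
\end{align*}
where $\chi$ is built from the characters, $C$ is an explicit nonzero constant, and there is an Euler-factor correction at primes dividing $N$ that depends on $d,t$. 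So $\langle f|B_d, \mQ_2(N)\rangle=0$ whenever $L(f,1)=0$. This shows that the cuspidal projection of $\mQ_2(N)$ lies in $\mS_{2,\rk=0}(\Gamma_0(N))$, giving the inclusion $\subseteq$ in \eqref{(1.2)}. Here I use that the rank-zero space is spanned by whole oldclasses, so orthogonality to the complement is checked newform by newform.

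For the reverse inclusion I argue by duality. Suppose $g\in\mS_{2,\rk=0}(\Gamma_0(N))$ is orthogonal to the cuspidal projection of $\mQ_2(N)$; it suffices to show $g=0$. Decompose $g=\sum_f\sum_{d\mid N/M_f}c_{f,d}\,f|B_d$ over newforms $f$ with $L(f,1)\ne0$. By multiplicity one and the Hecke-equivariance of the pairing, the conditions separate by $f$. For each such $f$, I choose the twisting character $\chi$ with $L(f\otimes\chi,1)\ne 0$; the trivial character works, since we may take $\phi=\psi=\bo$ and get $L(f,1)^2\neq0$. The vanishing of the pairing for all admissible level-raising parameters $t$ then becomes a linear system in the $c_{f,d}$. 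Its matrix has entries that are products of local factors at the primes dividing $N/M_f$.

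The main obstacle is showing this local matrix is invertible. The matrix factors as a tensor product over primes $p\mid N$. For $p\| N'$ (squarefree part) each factor is $2\times2$, with determinant a nonzero expression in $a_p(f)$ and $p$, as in the prime-level case. For the two primes $p,q$ with large exponents $a,b$, the factor is $(a-v_p(M)+1)$-dimensional. Here I would mimic Dickson–Neururer's weight-$k$ computation: use Eisenstein series whose characters have conductors $p^{j}$, so the pairing picks up Gauss sums and the matrix becomes triangular, with nonzero diagonal entries coming from Ramanujan-bound considerations ($|a_p(f)|\le 2\sqrt p$ prevents cancellation). The argument only succeeds for at most two such primes because one needs enough distinct character pairs $(\phi,\psi)$ with $\phi\psi$ trivial and conductor product dividing $N$. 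That is exactly the restriction in \cref{thm1.1}, and the same count suffices here, since the weight only enters through the factor $C$, which is nonzero at $s=1$.

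Finally, the Eisenstein parts on both sides agree trivially, which gives \eqref{(1.2)}.
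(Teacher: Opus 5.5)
Your easy direction (that the cuspidal projection of $\mQ_2(N)$ lies in $\mS_{2,\rk=0}(\Gamma_0(N))$) is fine in outline. It does need the generators with $\phi\neq\bo$, whose conductor divides the squarefree part of $N$, to be reduced to $\phi=\bo$ via Atkin--Lehner operators, i.e.\ by passing to $P_2(N)$.

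The reverse direction has a genuine gap. First, a concrete error: in weight $2$ one has $\ell=1$, and $\phi\psi(-1)=(-1)^{\ell}=-1$ forces $\phi\psi$ to be odd. So $E_1^{\bo,\bo}$ does not exist, and the pairing $L(f,1)^2$ is not available. Rankin--Selberg gives $C\,L(f,1)\,L(f_\alpha,1)$ with $\alpha$ an \emph{odd} primitive character of conductor dividing $N$. That some such $\alpha$ has $L(f_\alpha,1)\neq0$ is a nontrivial statement, not something you may simply choose. Second, and more fundamentally, the conditions do not separate by $f$. Hecke operators are self-adjoint for the Petersson product, but the test vectors $E_1^{\phi,\psi}|B_{t_1}\cdot E_1^{\bar\phi,\bar\psi}|B_{t_2}$ are not eigenforms. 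Nor is $\mQ_2(N)$ known a priori to be Hecke-stable; that is essentially what is being proved. So orthogonality of $g=\sum_i\beta_i f_i$ to one product is a single relation $\sum_i\beta_i L(f_i,1)L((f_i)_\alpha,1)=0$ (up to constants and conjugation) coupling all the newforms. Exhibiting one good character per newform therefore proves nothing.

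The missing ingredient is the vanishing criterion behind \cref{thm4.4}. Take $g$ in a common $W_S^N$-eigenspace and set $G:=\sum_i\beta_i L(f_i,1)f_i$. Then orthogonality to $P_2(N)$ says exactly that $L(G_\alpha,1)=0$ for all odd primitive $\alpha \bmod M$, $M\mid N$, and one must show this forces the non-eigenform $G$ to vanish. This is \cref{thm4.2} with $k=2$, $j=0$, $\epsilon=1$:
\begin{itemize}
\item By Merel's injectivity of $f\mapsto\xi_f^{+}$, it suffices to show $\xi^{+}_{G|W_N}(0;u,v)=0$ for all $(u,v)\in E_N$.
\item These symbols are linear combinations of the values $L(G_\alpha,1)$ when $u$ is prime to $pq$, and, via \eqref{(4.1)}, also when $v$ is.
\item If $p\mid u$ and $q\mid v$, the three-term relation \eqref{(4.2)} reduces to the pairs $(v,-u-v)$ and $(-u-v,u)$, in which $-u-v$ is prime to $pq$.
\end{itemize}
This is where the restriction to two prime powers comes from. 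It does not come from counting character pairs, nor from the invertibility of a local matrix whose triangular shape and Ramanujan-bound nondegeneracy you only assert. From $G=0$ and linear independence of newforms one gets $\beta_i L(f_i,1)=0$, hence $\beta_i=0$ for the rank-zero $f_i$.

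Oldforms need no local linear system either; one inducts on the level, using that:
\begin{itemize}
\item every $N_0\mid N$ has the same shape;
\item $\mQ_2(N_0)|B_d\subseteq\mQ_2(N)$;
\item $B_d$ commutes with cuspidal projection;
\item $\mQ_2(N_0)$ and $P_2(N_0)$ have the same projection onto the new subspace.
\end{itemize}
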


In this paper, we generalize the results of Dickson and Neururer. To this end, we introduce the prime trees associated with a finite subset $P$ of the set of all prime numbers. We define $N(P)$ as the product of all elements of $P$, and define the prime tree associated with $P$ for each $(x,y) \in \N^2$ as follows. Throughout this paper, we denote by $\Pri$ the set of all prime numbers.

\begin{definition}\label{def1.3}
    Let $P$ be a finite subset of $\Pri$. For $(x,y) \in \N^{2}$, we define the \emph{Additive Prime Tree} (APT) as the smallest subset $\APT_{P}(x,y) \subset \N^{2}$ satisfying:

    \begin{itemize}
      \item[(i)] $(x,y) \in \APT_P(x,y)$.
    
      \item[(ii)] If $(X,Y) \in \APT_P(x,y)$ and the following hold:
      \[
        \gcd(X, Y, N(P)) = 1,\ ^{\exists} p, p' \in P\ s.t.\ p\mid X \text{ and } p'\mid Y,
      \]
      then $(X, X + Y),\ (X + Y, Y) \in \APT_P(x,y)$.
    \end{itemize}
    We define a relation $\to$ on the set $\APT_{P}(x,y)$ by
    \[
        (X, Y) \to (Z, W) \overset{\mathrm{def}}{\Longleftrightarrow} (Z,W) = (X, X+Y)\ \text{or}\ (X+Y, Y).
    \]
    for $(X, Y), (Z, W) \in \APT_{P}(x,y)$.
    We say that a set $P$ is of \emph{finite type} if the set $\APT_P(x,y)$ is finite for every $(x,y) \in \N^2$; otherwise, we say that $P$ is of \emph{infinite type}.
\end{definition}

\begin{example}\label{ex1.4}
    Let $P = \{ 2,3,5 \}$ and $(x,y) = (2,3)$.
    
    We illustrate below the structure of $\APT_{P}(2,3)$ for this choice of $P$. 
    Each node represents a pair $(X,Y) \in \APT_{P}(2,3)$, and an arrow $(X,Y) \to (Z,W)$ indicates that $(Z,W)$ is obtained from $(X,Y)$ by one of the two branching rules in \cref{def1.3}.

    \begin{forest}
    for tree={
      grow=east,
      parent anchor=east,
      child anchor=west,
      edge={-latex},
      l sep=45pt,
      s sep=5pt,
      anchor=center,
      align=center
    }
    [{(2,3)}
      [{(5,3)}
        [{(8,3)}
          [{(11,3)}]
          [{(8,11)}]
        ]
        [{(5,8)}
          [{(13,8)}]
          [{(5,13)}]
        ]
      ]
      [{(2,5)}
        [{(7,5)}]
        [{(2,7)}]
      ]
    ]
    \end{forest}

    Then
    \begin{align*}
        \APT_{P}(2,3) = \Big\{ (2,3), (2,5),& (5,3), (2,7), (7,5), (5,8),\\ &(8,3), (5,13), (13,8), (8,11), (11,3)\Big\}.
    \end{align*}
\end{example}

We now summarize the fundamental properties of the additive prime trees introduced above.
The following theorem establishes their finiteness and the corresponding density estimates.

\begin{theorem} \label{main}
    Let $P$ be a finite subset of $\Pri$. Then the following statements hold.
    \begin{itemize}
        \item[(1)] $P$ is of finite type for $|P| \leq 4$.
        \item[(2)] Let $|P| = r \geq 5$. Then, there exist only finitely many primitive sets $P$, where the notion of primitive is defined in \cref{def2.5}.
        \item[(3)] We define $P(M) = \{ p:prime \mid p^{2}|M \}$ and
        \[
            \mA(N) = \{ M \leq N \mid P(M) \text{ is of finite type} \}.
        \]
        Then
        \[
            0.99999991254 \leq \lim_{N \to \infty} \frac{|\mA(N)|}{N} \leq 0.9999999153.
        \]
    \end{itemize}
\end{theorem}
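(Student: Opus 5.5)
The plan is to reduce everything to a finite combinatorial problem modulo $N(P)$. The admissibility condition in \cref{def1.3}(ii) depends only on $(X \bmod N(P), Y \bmod N(P))$, and both moves $(X,Y)\mapsto (X,X+Y)$ and $(X,Y)\mapsto(X+Y,Y)$ are reductions of the matrices $\left(\begin{smallmatrix}1&0\\1&1\end{smallmatrix}\right)$ and $\left(\begin{smallmatrix}1&1\\0&1\end{smallmatrix}\right)$. By the CRT, a state is a tuple of points $[X:Y]\in\mathbb{P}^1(\mathbb{F}_p)$, one for each $p\in P$ (or $(0,0)$ mod $p$, which is excluded). It is admissible when some coordinate reads $0$ and some other reads $\infty$, with no prime reading both.

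Since $\APT_P(x,y)$ is a finitely branching tree (strictly increasing sums, so there are no repeated nodes), König's lemma says it is infinite iff it contains an infinite path. By pigeonhole on the finite state space this happens iff the directed graph $G_P$ on admissible states contains a cycle reachable from an admissible state, and every admissible residue state is realised by some $(x,y)\in\N^2$. So \emph{$P$ is of infinite type iff $G_P$ has a directed cycle}. I would take this as the working criterion, and read \cref{def2.5} ("primitive") as: infinite type, but every proper subset of finite type. Infinite type is monotone in $P$, because a cycle for $Q\subset P$ lifts to $P$ by choosing nonzero residues at the extra primes that stay nonzero along the cycle. Hence every infinite-type set contains a primitive one.

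Two structural lemmas come first. \emph{(a) Monotone runs are short.} Along $(X,Y+nX)$, $n=0,1,2,\dots$, the coordinate $X$ is fixed, so the $0$-reading primes for the second coordinate must lie in $P\setminus\{p\mid X\}$, and each such prime kills only one residue class of $n$. By the CRT a run has length at most a quantity bounded in terms of $\prod_{q\in P}q$; more usefully, each step in a run consumes a \emph{fresh} prime $q$ with $q\mid X+Y$, because the primes dividing $Y$ stop dividing $X+Y$. \emph{(b) Bookkeeping of vanishings.} Every step sends all $\infty$-primes (for an $L$-move) or all $0$-primes (for an $R$-move) to generic points and requires a new prime to hit the special point. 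Thus a cycle of word $w$ in $L,R$ forces each $p\in P$ to have $\mathbb{P}^1(\mathbb{F}_p)$-orbit under the parabolic Möbius maps $t\mapsto t+1$, $t\mapsto t/(t+1)$ hitting $\{0,\infty\}$ at prescribed times. In addition, at every time the set of special primes must be covered.

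For (1), I would use (a) and (b) to show that with $|P|\le 4$ the word of a cycle has very restricted shape: runs of length at most $3$, alternating structure. Then I would rule out the finitely many remaining patterns, using that the product matrix of $w$ must fix each prime's starting point in $\mathbb{P}^1(\mathbb{F}_p)$. Each case becomes a congruence condition on $p$ that is incompatible with the others, and any leftover small cases are settled by direct computation of $G_P$. For (2), I would show that in a primitive set of size $r$ the cycle word length is bounded by some $f(r)$. Given that, each prime must divide an entry of one of finitely many matrix products in $\SL_2(\Z)$ of bounded length (the condition that $p$ hits $0$ or $\infty$ at a given time is $p\mid$ an explicit integer entry). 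So the primes are bounded and the primitive sets are finite in number.

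For (3), $M$ fails to lie in $\mA$ iff $P(M)\supseteq Q$ for some primitive $Q$, and the set of $M$ with $\prod_{p\in Q}p^2\mid M$ has density $\prod_{p\in Q}p^{-2}$. The lower bound $1-\sum_Q\prod_{p\in Q}p^{-2}$ follows from the union bound over all primitive $Q$, with a tail estimate for sizes $r\ge5$ using (2). The upper bound follows from exact inclusion–exclusion over the first few (smallest) primitive sets, obtained by computer search in $G_P$. The main obstacle is the bound on cycle length in (2) (and the analogous shape restriction in (1)). There I would first try to prove that between two consecutive vanishings of a fixed prime the word is forced by the covering requirement, giving $|w|\le C\cdot r$.
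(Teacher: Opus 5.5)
\textbf{Overall: the proposal does not prove (1) or (2).} Some of your reductions are correct, but the substantive steps are missing, and the local facts you build on are false.

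\textbf{What is correct.} Your reduction to the finite graph $G_P$ on $\prod_{p\in P}\mathbb{P}^1(\mathbb{F}_p)$ is sound. It is actually sharper than the paper's sufficient criterion (Proposition~\ref{prop2.2}): it makes the type of each fixed $P$ decidable, and it gives $L(P)<\infty$ for every finite-type $P$, which the paper uses tacitly in Lemma~\ref{lemma3.3}. Your implication ``bounded cycle length $\Rightarrow$ bounded primes'' in (2) is also sound.

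\textbf{Gap in (1).} A prime $q\nmid X$ divides $Y+nX$ for a whole residue class of $n$ modulo $q$. So along a run it recurs every $q$ steps rather than being used up. Nothing local limits runs to length $3$ when $|P|\le 4$: for $P=\{2,3,5,7\}$ the nodes $(7,44),(7,51),(7,58),(7,65),(7,72)$ are all admissible. More fundamentally, (1) is a statement about infinitely many sets $P$. You would need a uniform a priori bound on cycle length before any ``finitely many patterns'' reduction is possible. The paper has no such hand argument either. It settles (1) by an exhaustive computer search (Algorithm~1), in which the primes are symbolic and concrete values enter only as divisors of explicit determinants.

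\textbf{Gap in (2).} The bound $|w|\le f(r)$ is the whole content, and it cannot come from the covering requirement forcing the word. The two children $(X+Y,Y)$ and $(X,X+Y)$ of an admissible node are admissible under exactly the same condition (some $q\in P$ divides $X+Y$), so that requirement never selects a letter. What you are missing are the paper's two uniform bounds:
\begin{itemize}
\item $C_r<\infty$: a bound on run lengths depending on $r=|P|$ only (Lemma~\ref{lemma3.2}, a Jacobsthal-type statement). Your CRT bound in terms of $\prod_{q\in P}q$ is not uniform in the primes, so it cannot bound them.
\item $L_r<\infty$: the supremum of path lengths over \emph{all} finite-type $P$ with $|P|=r$, proved by induction in Lemma~\ref{lemma3.3}. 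Transfer matrices of length $j$ have entries $\le F_j$, so a prime $p>F_{L_r+C_r+1}$ can be special only in one short window of any path. Long segments outside that window are then paths in a tree for $P$.
\end{itemize}
With these in hand your route closes. In a primitive $P$, a prime $p$ stays special for at most $C_r$ consecutive steps. Since $P\setminus\{p\}$ is of finite type, $p$ returns to a special point within about $L_{r-1}$ steps after leaving. Hence $p$ divides a nonzero entry of a product of length $O(C_r+L_{r-1})$, which bounds it by a Fibonacci number.

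\textbf{Gap in (3).} Your statement (2) is purely qualitative and supplies no numerical tail estimate. The obvious tail, $\sum_{|S|=7}\prod_{p\in S}p^{-2}\approx 10^{-10}$, already exceeds the slack of about $5\cdot 10^{-11}$ between $1-\sum_{Q\in\Prim_5\cup\Prim_6}\prod_{p\in Q}p^{-2}$ and $0.99999991254$. This is why the paper also uses the classification of $7$-element primitive sets avoiding $3$. Your inclusion--exclusion upper bound over known primitive sets is fine, and it is stronger than the stated one.
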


The above theorem establishes the finiteness properties of additive prime trees and the asymptotic density of finite-type sets.

In the next result, we relate this finiteness condition to the structure of modular forms.  
When the set $P$ is of finite type, the decomposition of $\mM_k(\Gamma_0(N))$ into $\mE_{k}(\Gamma_{0}(N))$ and $\mQ_{k}(N)$ extends naturally to levels associated with $P$.

\begin{theorem}\label{thm1.6}
    Let $k \geq 2$ be even and let a finite subset $P \subset \Pri$ be of finite type. Then (\ref{(1.1)}) and (\ref{(1.2)}) hold for $N = N'\prod_{p \in P} p^{a_{p}}$ where $a_{p} \in \Z_{\geq 0}$ for all $p \in P$ and $N'$ is squarefree.
\end{theorem}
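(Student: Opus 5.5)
We follow the strategy of Dickson and Neururer \cite{DN}, which proves (\ref{(1.1)}) and (\ref{(1.2)}) through a Rankin--Selberg unfolding. The argument reduces to a nonvanishing statement. Fix a newform $g$ of level $M \mid N$ and weight $k$, together with its lifts $g(dz)$ for $dM \mid N$. For a suitable Eisenstein series $E_\ell^{\phi,\psi}$, the Petersson product $\langle g(dz), E_\ell^{\phi,\psi}E_{k-\ell}^{\phi',\psi'}\rangle$ unfolds to a product
\[
L(g,k-1)\,L(g\otimes\chi,k-\ell)
\]
times an explicit finite Euler factor at the primes dividing $N$. When $k=2$, the value $L(g,1)$ replaces $L(g,k-1)$, and this explains the restriction to $\mS_{2,\rk=0}$. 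By a standard duality argument, $\mQ_k(N)$ fails to surject onto the cuspidal part exactly when some nonzero cusp form is orthogonal to all such products. So it suffices to show that, for each newform $g$, the linear functionals indexed by the pairs of characters and the level-raising operators $B_d$ separate the oldspace $\langle g(dz) : dM\mid N\rangle$.

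The first step is to isolate the local problem at the primes of $P$. The squarefree part $N'$ and primes with $a_p\le 1$ are handled exactly as in \cite{DN}, since there the local oldspace has dimension at most $2$ and the corresponding local matrices are invertible. At a prime $p$ with $a_p\ge2$, the characters $\phi,\psi$ may have conductors divisible by $p$. The resulting local factor vanishes unless the conductors of the two characters, and the level shifts $d$, satisfy coprimality conditions. We then track which pairs of ``levels'' $(X,Y)$ of the two Eisenstein series give usable products. A product whose levels are $X$ and $Y$ can be rewritten, via the Hecke-type relations used in \cite{DN} (the relation $E(z)E(z)$ versus $E(dz)$ and the identity expressing $B_{d}$ acting on products), in terms of products with levels $(X,X+Y)$ or $(X+Y,Y)$. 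This applies precisely when $\gcd(X,Y,N(P))=1$ and both $X$ and $Y$ are divisible by primes of $P$, which is the branching rule of \cref{def1.3}. The terminal nodes of $\APT_P(x,y)$ are exactly the configurations in which one of the two levels is coprime to $N(P)$, or the gcd condition fails. In those configurations the local Euler factor can be evaluated directly and is nonzero, as in the two-prime case.

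The second step is an induction on the tree. If $P$ is of finite type, every $\APT_P(x,y)$ is finite, so we may argue by descending induction from the leaves. At a leaf the required nonvanishing (equivalently, the invertibility of the local matrix of Euler factors) holds by the direct \cite{DN} computation. At an internal node $(X,Y)$ the required identity is a linear combination of the identities at its two children $(X,X+Y)$ and $(X+Y,Y)$, so it holds once it holds at both children. Finiteness guarantees that this recursion terminates, which is exactly where the finite-type hypothesis enters; for two primes $p,q$ the trees are automatically finite, recovering \cref{thm1.1}. Assembling the local statements over all $p\in P$ shows that the products span the whole oldspace of $g$. For $k\ge4$, $L(g,k-1)\neq0$ by absolute convergence or by the edge of the critical strip, and $k-\ell$ can be chosen so that $L(g\otimes\chi,k-\ell)\neq0$. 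For $k=2$, the nonvanishing reduces to $L(g,1)\ne0$, which gives (\ref{(1.2)}).

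The main obstacle is the first step: setting up precisely the dictionary between the combinatorial move $(X,Y)\mapsto(X,X+Y),(X+Y,Y)$ and an explicit identity among the local factors (or products of Eisenstein series). In particular, one must verify that the leaf condition in \cref{def1.3} is exactly the condition under which the local determinant can be computed directly. I would first work out the case of one prime with $a_p\ge2$ explicitly from the \cite{DN} formulas. I would then read off the recursion from the structure of the local determinant as a polynomial in the Satake parameters of $g$ at $p$.
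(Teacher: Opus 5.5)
\textbf{There is a genuine gap.} The reduction at the end of your first paragraph is wrong. It is not enough, for each newform $g$, to separate the oldspace $\langle g(dz)\rangle$. A form orthogonal to all products can be $\sum_i\beta_i f_i$, a combination of \emph{distinct} newforms of level $N$ with the same Atkin--Lehner eigenvalues. Rankin--Selberg then only gives
$\sum_i\beta_i L(f_i,k-1)L((f_i)_\alpha,k-\ell)=0$ for all $(\alpha,\ell)\in B(N)$. Nonvanishing of $L(f_i,k-1)L((f_i)_\alpha,k-\ell)$ for each $f_i$ separately says nothing about this sum.

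What is needed, and what the paper proves as \cref{thm4.3}, is an injectivity statement: if $G\in\mS_k^{\mathrm{new}}(\Gamma_0(N))$ satisfies $L(G_\alpha,j+1)=0$ for all primitive $\alpha$ modulo $M\mid N$ and all admissible $j$, then $G=0$. One applies this to $G=\sum_i\beta_iL(f_i,k-1)f_i$ and uses $L(f_i,k-1)\neq0$ for $k\ge4$. The oldforms need no tree at all. They come from \cite[\S5]{DN} together with the decomposition $\bigoplus_{N_0\mid N}\bigoplus_{d}\mS_k^{\mathrm{new}}(\Gamma_0(N_0))|B_d$. Each $N_0$ again has the allowed shape, because its primes with exponent $\ge2$ form a subset of $P$, which is of finite type by \cref{prop2.4}.

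The finite-type hypothesis enters only through that injectivity statement, via modular symbols. Your proposed dictionary is aimed at the wrong objects.

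\begin{itemize}
\item The pairs in $\APT_P$ are bottom rows $(u,v)\in E_N$ of Manin symbols $[X^jY^{k-2-j},(u,v)]$.
\item The move $(u,v)\mapsto(u,u+v),(u+v,v)$ is the three-term relation (\ref{(4.2)}) combined with (\ref{(4.1)}) and (\ref{(4.3)}). It writes $\xi^+_{f|W_N}(j;u,v)$ through the values at $(u+v,v)$ and $(u,u+v)$.
\item The leaves are the pairs with one coordinate coprime to $N(P)$. These are exactly the symbols that unfold, via Gauss sums and orthogonality of characters, into values $L(f_\alpha,j+1)$ with no Atkin--Lehner operator at primes with $p^2\mid N$. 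Those are the only twisted values the pairing with $P_k(N)$ controls.
\item Finiteness of the tree makes every $\xi^{\pm}_{f|W_N}(j;u,v)$ an integral combination of leaf values, hence zero. Merel's injectivity of $f\mapsto\xi^{\pm}_f$ then gives $f=0$.
\end{itemize}

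By contrast, your ``levels'' of Eisenstein series are divisors of $N$, and $X+Y$ is in general not one. There is no Hecke-type identity rewriting a product of levels $(X,Y)$ as products of levels $(X,X+Y)$ and $(X+Y,Y)$. Nor does the tree have anything to do with a local determinant in Satake parameters. So the step you flag as the main obstacle is not merely unverified; it cannot be carried out as stated.
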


As an immediate consequence of \Cref{main,thm1.6},
we obtain explicit families of levels for which the decomposition formulas (\ref{(1.1)}) and (\ref{(1.2)}) hold unconditionally, together with an asymptotic estimate for their natural density.

\begin{corollary}\label{cor1.7}
    The following statements hold.
    \begin{enumerate}
        \item[(1)] Let $N = p^{a}q^{b}r^{c}s^{d}N'$,
        where $p,q,r,s$ are primes, $a,b,c,d \in \Z_{\ge0}$,
        and $N'$ is squarefree. Then (\ref{(1.1)}) and (\ref{(1.2)}) hold.
    
        \item[(2)] Define
        \begin{align*}
            \mB(N) := \{\, M \le N \mid\ &
            \text{(\ref{(1.1)}) holds for all even integers $k \ge 4$,}\\
            &\text{and (\ref{(1.2)}) holds for $k = 2$}\,\}.
        \end{align*}
        Then
        \[
            \lim_{N \to \infty} \frac{|\mB(N)|}{N} \ge 0.99999991254.
        \]
    \end{enumerate}
\end{corollary}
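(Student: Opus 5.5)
The corollary follows by combining \cref{main} with \cref{thm1.6}; the only care needed is matching the form of the levels and the type of density statement.

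For part (1), let $N = p^{a}q^{b}r^{c}s^{d}N'$ with $N'$ squarefree, and set $P=\{p,q,r,s\}$. If some of the four primes coincide, we merge them (adding exponents), and we discard any prime whose exponent is $0$. This gives a set with $|P|\le 4$, and $N$ is then of the form $N'\prod_{p\in P}p^{a_p}$ required in \cref{thm1.6}. If a prime of $P$ also divides $N'$, we move it into the product, raising its exponent by one, so that the remaining squarefree part is coprime to $P$. Either version is permitted by \cref{thm1.6}, since that theorem allows $a_p\in\Z_{\ge 0}$ and places no coprimality hypothesis. By \cref{main}(1), $P$ is of finite type. Hence \cref{thm1.6} gives (\ref{(1.1)}) for every even $k\ge 4$ and (\ref{(1.2)}) for $k=2$.

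For part (2), the key step is the inclusion $\mA(N)\subseteq \mB(N)$. Take $M\in\mA(N)$, so that $P(M)=\{p : p^2\mid M\}$ is of finite type. Factor
\[
M=\Big(\prod_{p\in P(M)} p^{v_p(M)}\Big)\cdot N',
\]
where $N'$ is the product of the primes dividing $M$ exactly once. Then $N'$ is squarefree, so $M$ has the shape required by \cref{thm1.6}. That theorem gives (\ref{(1.1)}) for all even $k\ge4$ and (\ref{(1.2)}) for $k=2$, so $M\in\mB(N)$. It follows that $|\mB(N)|/N\ge |\mA(N)|/N$. \cref{main}(3) says $\lim|\mA(N)|/N$ exists and is at least $0.99999991254$. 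Taking $\liminf$ of the inequality therefore yields the stated lower bound.

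The only delicate point is that the limit of $|\mB(N)|/N$ is not known to exist. Accordingly, the statement should be read as $\liminf_{N\to\infty}|\mB(N)|/N\ge 0.99999991254$, or else one assumes existence. I would record this explicitly in the write-up. Beyond that, part (2) needs nothing more than the monotonicity coming from the inclusion $\mA(N)\subseteq\mB(N)$, and I expect no real obstacle, since all the substance lies in \cref{main} and \cref{thm1.6}.
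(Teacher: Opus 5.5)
Your proposal is correct and matches the paper, which derives the corollary immediately from \cref{main} and \cref{thm1.6}: part (1) comes from \cref{main}(1), and part (2) comes from the inclusion $\mA(N)\subseteq\mB(N)$ combined with \cref{main}(3). Your remark that the existence of $\lim_{N\to\infty}|\mB(N)|/N$ is not established is right, so the bound in (2) should be read as a statement about the $\liminf$; this is a legitimate tightening of what the paper writes.
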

\vspace{5mm}
\section{Preliminaries}
Let $P$ be a finite subset of $\Pri$.  
From \cref{main}, we know that most such sets $P$ are of finite type.  
In this section, we provide examples of finite subsets $P$ that are of infinite type, give a criterion for detecting when a set is of infinite type, and describe an algorithm for finding such sets. For $a \in \N$, we define
\[
    N_P(a) := \gcd(a, N(P)).
\]

\begin{lemma}\label{lem2.1}
Let $P$ be a finite subset of $\Pri$, and let $(x_0,y_0)\to\cdots\to(x_i,y_i)$ be a path in $\APT_P(x,y)$.
Set $L=\bigl(\begin{smallmatrix}1&0\\[1pt]1&1\end{smallmatrix}\bigr)$ and $R=\bigl(\begin{smallmatrix}1&1\\[1pt]0&1\end{smallmatrix}\bigr)$.
Then there exists a unique matrix $M_i\in\langle L,R\rangle\subset \mathrm{SL}_2(\Z_{\ge0})$ such that
\[
  (x_i, y_i)=(x_0,y_0)M_i = (A_{i}x_{0}+B_{i}y_{0},C_{i}x_{0}+D_{i}y_{0})
\]
where $M_i=\bigl(\begin{smallmatrix}A_i&C_i\\ B_i&D_i\end{smallmatrix}\bigr)$.
\end{lemma}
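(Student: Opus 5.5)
Existence will follow by induction on the path length $i$. For $i=0$ take $M_0=I$, the empty product in $\langle L,R\rangle$. Suppose $(x_{i-1},y_{i-1})=(x_0,y_0)M_{i-1}$. By the definition of $\to$ in \cref{def1.3}, either $(x_i,y_i)=(x_{i-1},x_{i-1}+y_{i-1})$ or $(x_i,y_i)=(x_{i-1}+y_{i-1},y_{i-1})$. Acting on row vectors, we have $(X,Y)R=(X,X+Y)$ and $(X,Y)L=(X+Y,Y)$. Hence $M_i=M_{i-1}R$ or $M_i=M_{i-1}L$, and in either case $M_i\in\langle L,R\rangle$. Every element of this monoid is a product of matrices with nonnegative entries and determinant $1$, so $M_i\in \SL_2(\Z_{\ge0})$. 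Writing $M_i=\bigl(\begin{smallmatrix}A_i&C_i\\ B_i&D_i\end{smallmatrix}\bigr)$, the row-vector product gives $(A_ix_0+B_iy_0,\,C_ix_0+D_iy_0)$, as claimed.

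Uniqueness should be understood as uniqueness of the matrix determined by the path; it cannot mean uniqueness among all matrices with $(x_0,y_0)M=(x_i,y_i)$, since that equation does not pin $M$ down. Interpreted this way, it follows from the recursion, because each step $(x_{j-1},y_{j-1})\to(x_j,y_j)$ selects exactly one of $L,R$. The two moves can never coincide: $(X,X+Y)=(X+Y,Y)$ would force $X=Y=0$, which is impossible since the entries lie in $\N$. So the word $M_i=W_1\cdots W_i$ with $W_j\in\{L,R\}$ is read off uniquely from the path.

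If a stronger statement is wanted, namely that $M_i$ is determined by the endpoints $(x_0,y_0)$ and $(x_i,y_i)$ alone, I would use two facts:
\begin{itemize}
\item[(a)] $\langle L,R\rangle$ is free on $L,R$. Every non-identity product has a unique last letter, detected by comparing columns: $MR$ has second column greater than or equal to its first column entrywise, while $ML$ has the reverse.
\item[(b)] Along a path the step taken is forced by the current pair, by backtracking. If $x_j<y_j$ the last move was $R$; if $x_j>y_j$ it was $L$. Equality $x_j=y_j$ cannot occur after a step, since it would force $x_{j-1}=0$ or $y_{j-1}=0$.
\end{itemize}
Reversing the Euclidean algorithm in this way recovers the whole path, and hence $M_i$, from $(x_i,y_i)$ together with the length $i$. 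The main point to handle carefully is this backtracking argument; everything else is direct matrix multiplication.
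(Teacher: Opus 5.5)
Your proof is correct and is the same induction on path length that the paper invokes: each step right-multiplies by $R$ or $L$, and $\langle L,R\rangle$ is read as a monoid so that it lies in $\SL_2(\Z_{\ge0})$. Your backtracking argument also shows that $M_i$ is unique among all elements of $\langle L,R\rangle$ sending $(x_0,y_0)$ to $(x_i,y_i)$, which is the natural reading of the lemma. To make that endpoint-only version complete, add that the length is forced too: along the backward Euclidean chain from $(x_i,y_i)$ the sum $X+Y$ strictly decreases, so $(x_0,y_0)$ occurs in it exactly once. Freeness (a) is not actually needed for this.
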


\begin{proof}
This follows immediately by induction on the path length.
\end{proof}

\begin{proposition}\label{prop2.2}
Let $P$ be a finite subset of $\Pri$.  
Suppose that there exist $x,y \in \N$ and a sequence $((x_i, y_i))_{0 \le i \le \ell} \subset \APT_P(x,y)$ with $(x_0,y_0) \to \cdots \to (x_\ell,y_\ell)$ and $\ell \ge 1$ such that
\[
    (x_\ell, y_\ell) \in \{ (N_1 s, N_2 (K s + M t)) \mid s,t \in \Z \},
\]
and that
\[
    N_P(x_0) = N_P(x_\ell), \quad N_P(y_0) = N_P(y_\ell),
\]
where
\[
    N_1 = N_P(x_0), \quad
    N_2 = N_P(y_0), \quad
    M = N(P) / (N_1 N_2), \quad
    K = \biggl(\frac{x_0}{N_1}\biggr)^{-1} \biggl(\frac{y_0}{N_2}\biggr) \in (\Z/M\Z)^\times.
\]
Then $\APT_P(x,y)$ is infinite; that is, $P$ is of infinite type.
\end{proposition}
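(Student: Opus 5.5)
The plan is to show that the moves along the given path can be repeated indefinitely. The key fact is that the endpoint $(x_\ell,y_\ell)$ is, modulo every prime $p\in P$, a nonzero scalar multiple of the starting point $(x_0,y_0)$. Whether the branching rule (ii) of \cref{def1.3} applies at a node $(X,Y)$ depends only on which primes of $P$ divide $X$ and which divide $Y$. That information is preserved under multiplication by a nonzero scalar modulo each $p$. So the path can be transported from $(x_0,y_0)$ to $(x_\ell,y_\ell)$, and then iterated.

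\emph{Step 1 (per-prime scalar congruence).} Write $x_0=N_1u$ and $y_0=N_2v$. Since $\ell\ge1$, the node $(x_0,y_0)$ branches, so $\gcd(x_0,y_0,N(P))=1$ and $N_1,N_2$ are coprime. By the definition of $N_1,N_2$, both $u$ and $v$ are units modulo $M$, so $K=u^{-1}v$ makes sense. Write $x_\ell=N_1s$. The hypothesis $N_P(x_\ell)=N_1$ forces $s$ to be prime to every $p\mid M$. I then check each $p\in P$ separately.
\begin{itemize}
\item If $p\mid N_1$: then $p\mid x_0$ and $p\mid x_\ell$, while $p\nmid y_0$ and $p\nmid y_\ell$. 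Hence $(x_\ell,y_\ell)\equiv\lambda_p(x_0,y_0)\pmod p$ with $\lambda_p=y_\ell y_0^{-1}\not\equiv0$.
\item If $p\mid N_2$: the symmetric argument applies.
\item If $p\mid M$: from $y_\ell=N_2(Ks+Mt)$ we get $y_\ell\equiv N_2 s u^{-1}v \pmod p$. Together with $x_\ell=N_1 s = N_1 (su^{-1}) u$, this gives $(x_\ell,y_\ell)\equiv c\,(x_0,y_0)\pmod p$ with $c=su^{-1}$, which is a unit.
\end{itemize}
In every case $(x_\ell,y_\ell)\equiv\lambda_p(x_0,y_0)\pmod p$ with $\lambda_p\not\equiv0\pmod p$. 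I expect this bookkeeping to be the main (though modest) obstacle: one has to see that the awkward parametrization $(N_1s,N_2(Ks+Mt))$ is exactly the statement that the endpoint is projectively the same as the start modulo each $p\mid M$.

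\emph{Step 2 (transport of the path).} By \cref{lem2.1}, $(x_i,y_i)=(x_0,y_0)M_i$, where each $M_i$ is a product of $L$'s and $R$'s. Consider $(x'_i,y'_i):=(x_\ell,y_\ell)M_i$. Then $(x'_i,y'_i)\equiv\lambda_p(x_i,y_i)\pmod p$ for each $p\in P$. Consequently $p\mid x'_i\iff p\mid x_i$ and $p\mid y'_i\iff p\mid y_i$. So the conditions in (ii) hold at $(x'_i,y'_i)$ whenever they hold at $(x_i,y_i)$, and this is the case for all $0\le i<\ell$. By induction on $i$, every $(x'_i,y'_i)$ lies in $\APT_P(x,y)$. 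In particular $(x_{2\ell},y_{2\ell}):=(x_\ell,y_\ell)M_\ell\in\APT_P(x,y)$, and it satisfies $(x_{2\ell},y_{2\ell})\equiv\lambda_p^2(x_0,y_0)\pmod p$.

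\emph{Step 3 (iteration).} By induction on $n$, define $(x_{n\ell},y_{n\ell})=(x_0,y_0)M_\ell^n$. The argument of Step 2 applied with $\lambda_p^n$ in place of $\lambda_p$ shows that $(x_{n\ell},y_{n\ell})\in\APT_P(x,y)$ and $(x_{n\ell},y_{n\ell})\equiv\lambda_p^n(x_0,y_0)\pmod p$ for all $n$. Each arrow strictly increases $X+Y$, and $\ell\ge1$. Therefore $x_{n\ell}+y_{n\ell}$ is strictly increasing in $n$, the points are pairwise distinct, and $\APT_P(x,y)$ is infinite. Hence $P$ is of infinite type.
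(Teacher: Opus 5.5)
Your proof is correct and follows the paper's route. The paper also applies the path matrices $M_i$ to $(x_\ell,y_\ell)$, checks prime by prime (cases $p\mid N_1$, $p\mid N_2$, $p\mid M$) that each coordinate has the same $N_P$-value as $(x_i,y_i)$, and then iterates $M_\ell$. Your repackaging as the single congruence $(x_\ell,y_\ell)\equiv\lambda_p(x_0,y_0)\pmod p$ organizes the same computation more cleanly, and tracking $\lambda_p^n$ makes explicit the iteration to $(x_0,y_0)M_\ell^n$, which the paper only asserts.
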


\begin{proof}
Let $N:=N(P)$ and write $M_i=\bigl(\begin{smallmatrix}A_i&C_i\\ B_i&D_i\end{smallmatrix}\bigr)$ as in \cref{lem2.1}. For $0\le i\le \ell$ set
\[
  (X_i, Y_i) := (x_\ell, y_\ell)M_i = 
  (x_\ell, y_\ell)
  \begin{pmatrix}
    A_i & C_i \\[2pt]
    B_i & D_i
  \end{pmatrix}
  = (A_i x_\ell+B_i y_\ell, C_i x_\ell+D_i y_\ell)
\]
We claim that for all $i$,
\[
  N_P(x_i)=N_P(X_i)\quad\text{and}\quad N_P(y_i)=N_P(Y_i).
\]
Since $(X_0,Y_0)=(x_\ell,y_\ell)\in\APT_P(x,y)$, it suffices to prove the equalities of $N_P(\cdot)$.
We show the first; the second is identical with $x_{i}\leftrightarrow y_{i}$.

By the assumption of the proposition, we have
\[
  (x_\ell,y_\ell) = (N_1 s,\; N_2(Ks+Mt))
\]
for some $s,t\in\Z$, where the parameters $N_1,N_2,M,K$ are as defined above.

\smallskip
\emph{(A) Let $p$ be a prime divisor of $N_P(x_i)$. We prove that $p\mid X_i$.}

\smallskip
\noindent
If $p\mid M$, then since $x_i = A_i x_0 + B_i y_0$ and $p\nmid x_0,y_0$, we have
\[
    0 \equiv A_i x_0 + B_i y_0
   = A_i\!\left(\frac{x_0}{N_1}\right)\!N_1 + B_i\!\left(\frac{y_0}{N_2}\right)\!N_2
   \equiv (A_i N_1 + B_i N_2 K)\!\left(\frac{x_0}{N_1}\right) \pmod p,
\]
so $A_i N_1 + B_i N_2 K \equiv 0 \pmod p$.
Therefore,
\[
    X_i = A_i N_1 s + B_i N_2 (Ks+Mt)
   \equiv (A_i N_1 + B_i N_2 K)s \equiv 0 \pmod p.
\]

\smallskip
\noindent
If $p\nmid M$, then $p$ divides either $N_1$ or $N_2$.
Suppose $p\mid N_1$ (the other case is analogous).  
Since $p\mid x_0$ and $p\nmid y_0$, we have
\[
    0 \equiv x_i = A_i x_0 + B_i y_0 \equiv B_i y_0 \pmod p,
\]
so $p\mid B_i$. Hence
\[
    X_i = A_i N_1 s + B_i N_2 (Ks+Mt) \equiv 0 \pmod p.
\]
Thus in all cases $p\mid X_i$.

\smallskip
\emph{(B) Let $p$ be a prime divisor of $N/N_P(x_i)$. We prove that $p\nmid X_i$.}

\smallskip
\noindent
If $p\mid M$, then by definition of $N_P(x_i)$ we have $p\nmid x_i,y_i$, hence
\[
0 \not\equiv A_i x_0 + B_i y_0
   = A_i\!\left(\frac{x_0}{N_1}\right)\!N_1 + B_i\!\left(\frac{y_0}{N_2}\right)\!N_2
   \equiv (A_i N_1 + B_i N_2 K)\!\left(\frac{x_0}{N_1}\right) \pmod p,
\]
so $A_i N_1 + B_i N_2 K \not\equiv 0 \pmod p$.
Therefore,
\[
X_i = A_i N_1 s + B_i N_2 (Ks+Mt)
   \equiv (A_i N_1 + B_i N_2 K)s \not\equiv 0 \pmod p.
\]

\smallskip
\noindent
If $p\nmid M$, then $p$ divides either $N_1$ or $N_2$.
Suppose $p\mid N_1$ (the other case is analogous).  
Since $p\nmid N_P(x_i)$, we have $x_i = A_i x_0 + B_i y_0 \equiv B_i y_0 \pmod p$,
so $p\nmid B_i$.  
Using $\gcd(x_\ell,y_\ell,N)=1$ we deduce
\[
X_i = A_i N_1 s + B_i N_2 (Ks+Mt) \not\equiv 0 \pmod p.
\]
Thus, in all cases $p\nmid X_i$.

\smallskip
From (A) and (B) we conclude $N_P(x_i)=N_P(X_i)$ for all $i$; the same argument gives $N_P(y_i)=N_P(Y_i)$.
By the rule in \cref{def1.3}, this implies
\[
  (X_0,Y_0)\to (X_1,Y_1)\to\cdots\to (X_\ell,Y_\ell),\qquad (X_i,Y_i)\in\APT_P(x,y).
\]
Since $\ell\ge1$, the matrix $M_\ell\in \langle L,R\rangle$ is not the identity.
Therefore, for any $n\in\N$,
\[
    (x_0, y_0)M_\ell^{\,n}\in\APT_P(x,y),
\]
and these pairs are all distinct.
Hence $\APT_P(x,y)$ is infinite, i.e.\ $P$ is of infinite type.
\end{proof}

Using the criterion above, we can construct the following example of an infinite type set.

\begin{example} \label{ex2.3}
    Let $P = \{ 2,3,5,7,19 \}$. Then $|\APT_P(2,325)| = \infty$, and hence $P$ is of infinite type.
\end{example}

\begin{proof}
    Let
    \begin{align*}
    \scalebox{0.95}{$
        A = \{(2, 325),(2, 327),(329, 327),(329, 656),(329, 985),(1314, 985),(2299, 985),(3284, 985)\}.
        $}
    \end{align*}
    Then $A \subset \APT_P(2,325)$, and the following sequence holds:
    \begin{align*}
    \scalebox{0.85}{$
        (2, 325) \rightarrow (2, 327) \rightarrow (329, 327) \rightarrow (329, 656) \rightarrow (329, 985)
        \rightarrow (1314, 985) \rightarrow (2299, 985) \rightarrow (3284, 985).
        $}
    \end{align*}
    Moreover,
    \[
    (3284,985) = (2s, 5(65s + 399t)), \quad
    N_P(2) = N_P(3284), \quad N_P(325) = N_P(985)
    \]
    for $(s,t) = (1642,-267)$.  
    By \cref{prop2.2}, $P$ is of infinite type.
\end{proof}

The following proposition shows that the property of being of infinite type is preserved under inclusion.

\begin{proposition}\label{prop2.4}
Let $P$ and $Q$ be finite subsets of $\Pri$ with $P \subset Q$.  
If $P$ is of infinite type, then $Q$ is also of infinite type.
\end{proposition}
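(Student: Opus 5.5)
The idea is to transport the infinite tree for $P$ to an infinite tree for $Q$ by changing the starting pair, using two observations. First, the branching condition in \cref{def1.3} for $P$ only involves the residues of $(X,Y)$ modulo $N(P)$. Second, $\gcd(X,Y)$ does not change along edges, since $\gcd(X,X+Y)=\gcd(X+Y,Y)=\gcd(X,Y)$.

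Fix $(x,y)\in\N^2$ with $\APT_P(x,y)$ infinite. Write $\mathcal{M}_P(x,y)\subset\langle L,R\rangle$ for the set of matrices $M$ such that $(x,y)M$ is reached from $(x,y)$ by some path in $\APT_P(x,y)$. By \cref{lem2.1} every element of $\APT_P(x,y)$ has the form $(x,y)M$ with $M\in\mathcal{M}_P(x,y)$, so $\mathcal{M}_P(x,y)$ is infinite.

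By the Chinese remainder theorem, choose $x',y'\in\N$ with
\[
x'\equiv x,\quad y'\equiv y \pmod{N(P)},\qquad x'\equiv y'\equiv 1 \pmod q \text{ for every } q\in Q\setminus P .
\]
The claim is that every $M\in\mathcal{M}_P(x,y)$ gives $(x',y')M\in\APT_Q(x',y')$. We prove this by induction along a path $(x_0,y_0)\to\cdots\to(x_i,y_i)$ in $\APT_P(x,y)$ with $(x_j,y_j)=(x,y)M_j$. Set $(x'_j,y'_j):=(x',y')M_j$. Then $(x'_j,y'_j)\equiv(x_j,y_j)\pmod{N(P)}$ componentwise, because $M_j$ has integer entries.

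Suppose the step $(x_j,y_j)\to(x_{j+1},y_{j+1})$ was allowed in $\APT_P$. Then $\gcd(x_j,y_j,N(P))=1$, and there are $p,p'\in P$ with $p\mid x_j$ and $p'\mid y_j$. By the congruence, the same $p,p'\in P\subset Q$ divide $x'_j$ and $y'_j$, and no prime of $P$ divides both. For $q\in Q\setminus P$, the invariance of $\gcd$ gives $\gcd(x'_j,y'_j)=\gcd(x',y')$, which is prime to $q$ because $x'\equiv1\pmod q$. Hence $\gcd(x'_j,y'_j,N(Q))=1$, the branching rule for $Q$ applies, and $(x'_{j+1},y'_{j+1})\in\APT_Q(x',y')$.

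Finally, the map $M\mapsto(x',y')M$ is injective on $\langle L,R\rangle$ because $x',y'>0$. For a pair $(Z,W)$ with positive entries that is not the starting pair, the last step is forced: it was $R$ if $W>Z$ and $L$ if $Z>W$, and undoing it keeps positive entries. Inducting on the word length of $M$ therefore recovers $M$ uniquely from $(x',y')M$. So $\APT_Q(x',y')$ contains infinitely many elements, and $Q$ is of infinite type.

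The only point needing care is that primes of $Q\setminus P$ could divide both coordinates and block branching. The invariance of $\gcd$ along edges, together with the CRT choice of $(x',y')$, rules this out. Everything else is a direct transfer of residues modulo $N(P)$.
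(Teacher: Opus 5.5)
Your proof is correct, but it moves the $P$-tree into a $Q$-tree by a different mechanism than the paper.

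The paper fixes an infinite path $(x_i,y_i)_{i\ge0}$ in $\APT_P(x,y)$. It asserts this path exists without comment; this is a K\"onig-type step, using that each node has at most two successors. It then rescales the path, claiming $(x_i/v,y_i/v)\in\APT_Q(x/v,y/v)$ for $v=\gcd(x,y,N(Q))$. You instead keep the matrix words and change the root by the Chinese remainder theorem. You then transport the whole tree and get infinitude from the injectivity of $M\mapsto(x',y')M$, so no infinite path is needed.

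Your explicit use of $\gcd(X,X+Y)=\gcd(X+Y,Y)=\gcd(X,Y)$ to control the primes of $Q\setminus P$ at every node is the point the paper leaves implicit. Here your normalization is actually more robust than the paper's. Since $N(Q)$ is squarefree, $v$ removes only one factor of each $q\in Q\setminus P$. So if $q^{2}\mid\gcd(x,y)$, then $q$ still divides both $x/v$ and $y/v$, and the root of $\APT_Q(x/v,y/v)$ cannot branch. This case really occurs: replacing a root with infinite $P$-tree by $(q^{2}x,q^{2}y)$ still gives an infinite $P$-tree, because $q^{2}$ is a unit modulo $N(P)$.

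The paper's route is repaired by dividing by the full $g=\gcd(x,y)$, which is prime to $N(P)$ because the root must branch. That gives the shortest proof, $\APT_P(x,y)=g\cdot\APT_P(x/g,y/g)\subset g\cdot\APT_Q(x/g,y/g)$, with no CRT or injectivity argument. Your CRT choice of root reaches the same conclusion correctly by a slightly longer path.
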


\begin{proof}
Since $P$ is of infinite type, there exists $(x,y) \in \N^2$ such that $|\APT_P(x,y)| = \infty$.  
That is, there exists a sequence $((x_i, y_i))_{i \ge 0} \subset \APT_P(x,y)$ with $(x_0,y_0) = (x,y)$ and $(x_i,y_i) \to (x_{i+1},y_{i+1})$ for all $i \ge 0$.

Let $v = \gcd(x, y, N(Q))$.  
Then $\gcd(x/v,\, y/v,\, N(Q)) = 1$, and $(x_i/v,\, y_i/v) \in \N^2$ for all $i \ge 0$ because $v$ divides both $x_i$ and $y_i$.  
Consider $\APT_Q(x/v, y/v)$.  
By definition, $(x/v, y/v) \in \APT_Q(x/v, y/v)$.

For each $i \ge 0$, since $(x_i, y_i) \in \APT_P(x,y)$, there exist primes $p, p' \in P$ such that $p \mid x_i$ and $p' \mid y_i$.  
Because $P \subset Q$, the same divisibility conditions hold in $\APT_Q(x/v, y/v)$.  
Hence, by induction on $i$, we have $(x_i/v, y_i/v) \in \APT_Q(x/v, y/v)$ for all $i \ge 0$.

Therefore, $\APT_Q(x/v, y/v)$ is infinite, and thus $Q$ is of infinite type.
\end{proof}

From \cref{prop2.4}, the property of being of infinite type is preserved under inclusion.  
Therefore, it is natural to consider minimal subsets $P$ that are of infinite type.  
We call such minimal sets \emph{primitive sets}.

\begin{definition}\label{def2.5}
Let $P$ be a finite subset of $\Pri$.  
We say that $P$ is a \emph{primitive set} if it is of infinite type and every proper subset of $P$ is of finite type.  
We denote by $\Prim_r$ the set of all primitive sets $P$ with $|P| = r$, and by $\Prim$ the set of all primitive sets.
\end{definition}

From \cref{ex2.3} and \cref{main} (1), we see that $P = \{2, 3, 5, 7, 19\}$ is a primitive set.  
The following table lists the number of primitive sets.  
Using a computer search, we completely classified all primitive sets with $|P| \le 6$. In addition, our computation has so far identified $1095$ primitive sets in total.

\newpage

\renewcommand{\arraystretch}{1.1}
\begin{table}[h]
    \centering
    \caption{Number of primitive sets}
    \begin{tabular}{|c|c|l|}
    \hline
    $r$ & $|\Prim_{r}|$ & Example of $P \in \Prim_{r}$ \\
    \hline\hline
    $\leq4$ & $0$ & $\emptyset$ \\
    \hline
    $5$ & $2$ & $\{ 2,3,5,13,17 \}$ \\
    \hline
    $6$ & $27$ & $\{2,3,5,7,11,13\}$ \\
    \hline
    $7$ & $\geq 488$ & $\{3, 7, 11, 17, 19, 23, 29\}$ \\
    \hline
    $8$ & $\geq 395$ & $\{3, 5, 13, 19, 31, 43, 79, 109\}$ \\
    \hline
    $9$ & $\geq 183$ & $\{5, 7, 29, 41, 59, 61, 71, 101, 109\}$ \\
    \hline
    \end{tabular}
\end{table}

To classify primitive sets, we introduce an algorithmic procedure that
systematically explores all possible assignments of correspondences and pairings.
The main routine, Algorithm~1, generates candidates for primitive
sets by iteratively applying Algorithm~2.
Starting from an initial correspondence in which every prime is mapped to~$1$,
the algorithm expands all admissible paths up to a prescribed depth~$limit$.
The remaining states after this expansion provide potential primitive candidates.
Since for a fixed cardinality $|P|$ there exist only finitely many primitive sets,
this procedure terminates after finitely many steps.

\begin{breakalg}{1: Generate Candidates of Primitive Sets}
\begin{algorithmic}[1]

\Require $\mathrm{num},\; limit$
\Ensure A list $\mathcal{L}$ of remaining states after $limit$ expansions

\State Construct $P_{\mathrm{cor}} := \{p_1 \mapsto 1, \dots, p_{\mathrm{num}} \mapsto 1\}$
\State Set $\mathrm{Path} := [(a,b)]$

\State Generate all admissible initial assignments for $P_{\mathrm{pair}}$

\For{each initial triple $T = (P_{\mathrm{cor}}, P_{\mathrm{pair}}, \mathrm{Path})$}
    \State $\mathcal{L} \gets \{T\}$
    \For{$i = 1$ to $limit$}
        \State Replace $\mathcal{L}$ with the list obtained by applying Algorithm 2 to all elements of $\mathcal{L}$
    \EndFor
    \State \Return $\mathcal{L}$
\EndFor

\end{algorithmic}
\end{breakalg}

The local update rule used in Algorithm 1 is described in
Algorithm 2.  
Given the current correspondence, pairing, and path, the algorithm produces all
admissible next states in the search tree.  
Depending on whether $a+b$ (the next branching value) is divisible by a prime in
$P$, the correspondence or pairing may remain unchanged, or may be updated by
assigning unused primes.  
This local rule governs the branching behavior of the paths generated in
Algorithm 1.

\begin{breakalg}{2: Generate new paths}
\begin{algorithmic}[1]

\Require $P_{\mathrm{cor}},\; P_{\mathrm{pair}},\; \mathrm{Path}$
\Ensure A list of triples $(\text{new\_}P_{\mathrm{cor}},\; \text{new\_}P_{\mathrm{pair}},\; \text{new\_Path})$

\State Let $(a,b) \gets \mathrm{Path}[-1]$
\State Initialize an empty list $\mathcal{L}$

\If{$a+b$ is divisible by some $p \in P$}
    \State $\text{new\_}P_{\mathrm{cor}} \gets P_{\mathrm{cor}}$
    \State $\text{new\_}P_{\mathrm{pair}} \gets P_{\mathrm{pair}}$
    \State Append $(\text{new\_}P_{\mathrm{cor}}, \text{new\_}P_{\mathrm{pair}},
           \mathrm{Path} + [(a,a+b)])$ to $\mathcal{L}$
    \State Append $(\text{new\_}P_{\mathrm{cor}}, \text{new\_}P_{\mathrm{pair}},
           \mathrm{Path} + [(a+b,b)])$ to $\mathcal{L}$
\Else
    \State (i) use the current pairing
    \State Update $P_{\mathrm{cor}}$ so that $a+b$ is divisible by some $p\in P$
           according to $P_{\mathrm{pair}}$
    \State Append the two updated paths to $\mathcal{L}$

    \State (ii) use new pairings with unused primes
    \For{each admissible new pairing $P_{\mathrm{pair}}'$ using unused primes in $P$}
        \State Append the two extended paths to $\mathcal{L}$
    \EndFor
\EndIf

\State \Return $\mathcal{L}$

\end{algorithmic}
\end{breakalg}

The complete implementation of both algorithms is available at the following GitHub repository: \url{https://github.com/yusukekusuy/additive_prime_tree}.

To illustrate how the procedure works in practice, we now present an explicit
example in a specific case.

\begin{example}
For $|P| = 3$, we start with
\[
P_{\mathrm{cor}} = \{p_{1}\!:\!1,\; p_{2}\!:\!1,\; p_{3}\!:\!1\}, 
\qquad
\mathrm{Path} = [(a,b)].
\]
All admissible initial assignments of $P_{\mathrm{pair}}$ produce the following
two triples:
\[
T_{1} = 
\bigl(
P_{\mathrm{cor}},\;
\{\, a:\{p_{1}\},\; b:\{p_{2}\} \,\},\;
[(a,b)]
\bigr),
\]
\[
T_{2} = 
\bigl(
P_{\mathrm{cor}},\;
\{\, a:\{p_{1},p_{2}\},\; b:\{p_{3}\} \,\},\;
[(a,b)]
\bigr).
\]
These two triples serve as the initial inputs to Algorithm 1 .
\end{example}

\begin{example}
\noindent\textbf{Case 1.}
Let
\[
P_{\mathrm{cor}} = \{p_{1}\!:\!1,\; p_{2}\!:\!1,\; p_{3}\!:\!1,\; p_{4}\!:\!1\},
\quad
P_{\mathrm{pair}} = \{\, a:\{p_{1}\},\; b:\{p_{2}\} \,\},
\quad
\mathrm{Path} = [(a,b)],
\]
and set
\[
T = (P_{\mathrm{cor}}, P_{\mathrm{pair}}, \mathrm{Path}).
\]
Then $\mathrm{Path}[-1] = (a,b)$ and the sum is $a+b$.  
By construction of $P_{\mathrm{pair}}$, the value $a+b$ is not divisible by
$p_{1}$ or $p_{2}$, so in order for the tree to continue branching we must assign
a new prime divisor to $a+b$.

We consider the two extensions
\[
\mathrm{Path}_{1} = [(a,b), (a,a+b)],
\qquad
\mathrm{Path}_{2} = [(a,b), (a+b,b)].
\]
Using the unused primes $p_{3}$ and $p_{4}$, we introduce the following new
pairings:
\[
P_{\mathrm{pair}}^{(1)} =
\{\, a:\{p_{1}\},\; b:\{p_{2}\},\; a+b:\{p_{3}\} \,\},
\]
\[
P_{\mathrm{pair}}^{(2)} =
\{\, a:\{p_{1}\},\; b:\{p_{2}\},\; a+b:\{p_{3},p_{4}\} \,\}.
\]
Accordingly, Algorithm 2 produces the four new triples
\begin{align*}
    T_{1} = (P_{\mathrm{cor}}, P_{\mathrm{pair}}^{(1)}, \mathrm{Path}_{1}),\quad
    T_{2} = (P_{\mathrm{cor}}, P_{\mathrm{pair}}^{(1)}, \mathrm{Path}_{2}),\\
    T_{3} = (P_{\mathrm{cor}}, P_{\mathrm{pair}}^{(2)}, \mathrm{Path}_{1}),\quad
    T_{4} = (P_{\mathrm{cor}}, P_{\mathrm{pair}}^{(2)}, \mathrm{Path}_{2}).
\end{align*}

\noindent\textbf{Case 2.}
Let
\[
P_{\mathrm{cor}} = \{p_{1}\!:\!1,\; p_{2}\!:\!1,\; p_{3}\!:\!1,\; p_{4}\!:\!1\},
\quad
P_{\mathrm{pair}} = 
\{\, a:\{p_{1}\},\; b:\{p_{2}\},\; a+b:\{p_{3},p_{4}\} \,\},
\]
and
\[
\mathrm{Path} = [(a,b),\,(a,a+b)],
\qquad
T = (P_{\mathrm{cor}}, P_{\mathrm{pair}}, \mathrm{Path}).
\]

Here $\mathrm{Path}[-1] = (a, a+b)$, so the next sum is
\[
a + (a+b) = 2a + b.
\]
In order for the tree to continue branching, $2a+b$ must be divisible by
some $p \in P$.  
From the pairing structure $P_{\mathrm{pair}}$, this happens only when
\[
p_{2} = 2.
\]

Thus $P_{\mathrm{cor}}$ must be updated by assigning $p_{2}$ to $2a+b$, giving
\[
P_{\mathrm{cor}}' =
\{\, p_{1}\!:\!1,\; p_{2}\!:\!2,\; p_{3}\!:\!1,\; p_{4}:1 \,\}.
\]

We then obtain the two extensions
\[
\mathrm{Path}_{1}
= [(a,b),\,(a,a+b),\,(2a+b,b)],
\quad
\mathrm{Path}_{2}
= [(a,b),\,(a,a+b),\,(a,2a+b)].
\]

Accordingly, Algorithm 2 produces the two new triples
\[
T_{1} = (P_{\mathrm{cor}}',\, P_{\mathrm{pair}},\, \mathrm{Path}_{1}),
\qquad
T_{2} = (P_{\mathrm{cor}}',\, P_{\mathrm{pair}},\, \mathrm{Path}_{2}).
\]
\end{example}

\section{Proof of \cref{main}}
In this section, we prove \cref{main}.  
Using Algorithm 1, we see that any finite subset $P \subset \Pri$ with $|P| \le 4$ is of finite type.  
For completeness, we provide an explicit enumeration of all cases in the GitHub repository under \texttt{additive\_prime\_tree/figure}, since the number of case distinctions is too large to include in the Appendix.\\ 

Next, we prove \cref{main} (2).  
For a finite type set $P$ and $x, y, r \in \N$, we define the quantities $L(P;x,y)$, $L(P)$, $L_r$, $C(P;x,y)$, $C(P)$ and $C_{r}$ by
\begin{align*}
    L(P;x,y) &:= \max \{ \ell \in \N \mid ((x_i,y_i))_{0 \le i \le \ell} \subset \APT_P(x,y)
        \text{ with } (x_0,y_0) \to \cdots \to (x_\ell,y_\ell) \},\\
    L(P) &:= \sup_{(x,y) \in \N^2} L(P;x,y), \qquad
    L_r := \sup_{\substack{P \text{ of finite type} \\ |P| = r}} L(P),\\
    C(P;x,y) &:= \max \{ \ell \in \N \mid ((x, i x + y))_{0 \le i \le \ell} \subset \APT_P(x,y) \},\\
    C(P) &:= \sup_{(x,y) \in \N^2} C(P;x,y), \qquad
    C_r := \sup_{\substack{P \text{ of finite type} \\ |P| = r}} C(P).
\end{align*}

To illustrate these definitions, we consider the following example.

\begin{example}
    From \cref{ex1.4} and the data in \texttt{additive\_prime\_tree/figure/case\_|P|=3}, we have
    \begin{align*}
        L(\{ 2,3,5 \};2,3) &= 3,\quad L(\{ 2,3,5 \}) = 3,\quad L_{3} = 3,\\ 
        C(\{ 2,3,5 \};2,3) &= 2,\quad C(\{ 2,3,5 \}) = 3,\quad C_3 = 3.
    \end{align*}
\end{example}

We prove the following lemma and proposition to establish \cref{main} (2).

\begin{lemma}\label{lemma3.2}
For any $r \in \N$, we have $C_r < \infty$.
\end{lemma}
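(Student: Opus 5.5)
The plan is to turn the condition defining $C(P;x,y)$ into a covering problem for consecutive integers by residue classes. Then bound the length of the chain by a sieve estimate that depends only on $r=|P|$, and not on $P$, $x$ or $y$.

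\emph{Step 1: reduction to a covering condition.} Suppose $((x, ix+y))_{0\le i\le \ell}\subset \APT_P(x,y)$ with $\ell\ge 1$. Since $(x,ix+y)\to(x,(i+1)x+y)$ for $0\le i\le \ell-1$, rule (ii) of \cref{def1.3} must have been applied to $(x,ix+y)$. (If the pair were reached in some other way, its successor would still have to come from rule (ii), because the tree only grows via (ii).) Hence for each $0\le i\le \ell-1$:
\begin{itemize}
\item $\gcd(x, ix+y, N(P))=1$;
\item $x$ has a prime divisor in $P$;
\item $ix+y$ has a prime divisor in $P$.
\end{itemize}
Let $Q:=\{q\in P : q\nmid x\}$. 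Since $x$ has a prime divisor in $P$, we get $|Q|\le r-1$. If $p\in P$ divides both $x$ and $ix+y$, the gcd condition fails. Therefore every prime of $P$ dividing $ix+y$ lies in $Q$. Consequently each of the $\ell$ consecutive integers $i=0,1,\dots,\ell-1$ satisfies $q\mid ix+y$ for some $q\in Q$.

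\emph{Step 2: each prime covers one residue class.} For $q\in Q$ we have $q\nmid x$, so $ix+y\equiv 0 \pmod q$ holds exactly when $i\equiv a_q:=-y x^{-1}\pmod q$. Thus the interval $\{0,\dots,\ell-1\}$ is covered by the residue classes $a_q \bmod q$, $q\in Q$, with at most $r-1$ distinct primes. It remains to show this forces $\ell\le B_r$ for a constant $B_r$ depending only on $r$. Then $C(P;x,y)\le B_r$ uniformly, so $C(P)\le B_r$ and $C_r\le B_r<\infty$.

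\emph{Step 3: sieve bound (the main point).} Count the $i\in\{0,\dots,\ell-1\}$ avoiding all classes $a_q \bmod q$ by inclusion–exclusion. For each subset $S\subseteq Q$, the Chinese remainder theorem shows that the $i$ in the interval lying in all classes indexed by $S$ number $\ell/\prod_{q\in S}q+\theta_S$ with $|\theta_S|\le 1$. Summing with signs gives
\[
\#\{\text{uncovered } i\}\ \ge\ \ell\prod_{q\in Q}\Bigl(1-\frac1q\Bigr)-2^{|Q|}\ \ge\ \ell\prod_{j=1}^{r-1}\Bigl(1-\frac1{p_j}\Bigr)-2^{r-1},
\]
where $p_j$ is the $j$-th prime. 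Here we used that $\prod_{q\in Q}(1-1/q)$ is minimized when $Q$ consists of the smallest primes, and $|Q|\le r-1$. Since there are no uncovered $i$, we obtain
\[
\ell\le B_r:=2^{r-1}\prod_{j=1}^{r-1}\Bigl(1-\frac1{p_j}\Bigr)^{-1}.
\]

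The only real content is Step 3: a Jacobsthal-type bound for at most $r-1$ primes. The inclusion–exclusion estimate above suffices, since no sharp value of $C_r$ is needed. In Step 1, care is needed to apply the gcd condition to each pair $(x,ix+y)$ with $i<\ell$; this is what excludes the primes dividing $x$ from the covering.
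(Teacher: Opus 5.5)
Your proof is correct. It follows the paper's reduction but replaces the paper's key analytic input with an elementary one.

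Steps 1--2 match the paper: its set $P'=\{p\in P : p\nmid x\}$ is your $Q$, and both arguments reduce to an initial segment of $\Z_{\ge0}$ being covered by one residue class $a_q \bmod q$ for each $q\in Q$. Your indexing is the more accurate one. Only $i=0,\dots,\ell-1$ need to be covered, since the last pair $(x,\ell x+y)$ need not have a prime factor in $P$, as $(2,7)$ in \cref{ex1.4} shows; the paper instead writes $\{0,\dots,\ell\}\subset U(P;x,y)$. The cleanest way to state your justification that rule (ii) was applied to each $(x,ix+y)$ is that this pair is the only possible predecessor of $(x,(i+1)x+y)$ in $\N^2$.

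The real difference is Step 3. The paper invokes Iwaniec's work on Jacobsthal's problem. After a CRT shift the covered set is $\{i : \gcd(i-a,N(Q))>1\}$, so the longest covered block has length $j(N(Q))-1$. Here $j(n)$ is the Jacobsthal function, the least $m$ such that every $m$ consecutive integers contain one coprime to $n$. Iwaniec's bound then gives $C_r\le c\,r^2(\log r)^2$ with an inexplicit absolute constant. You instead use the Legendre inclusion--exclusion sieve, which is essentially Jacobsthal's trivial bound. It yields the explicit but exponential estimate $C_r\le 2^{r-1}\prod_{j<r}(1-1/p_j)^{-1}$.

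\cref{lemma3.2} is only used qualitatively: it fixes the threshold $F_{L_r+C_r+1}+1$ in \cref{lemma3.3}, and from there gives the finiteness of $\Prim_r$. So your bound is fully sufficient, and it has the advantage of being self-contained. The paper's deeper input would only matter if one wanted a good quantitative dependence of $C_r$ on $r$.
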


\begin{proof}
Let $P$ be a finite subset of $\Pri$ with $|P| = r$, and let $x, y \in \N$ satisfy $\gcd(x, y, N(P)) = 1$ and $|\APT_P(x,y)| > 1$. 
If $p \in P$ divides $x$, then $p \nmid (i x + y)$ for all $i \ge 0$.  
Set
\[
    P' := \{\, p \in P \mid p \nmid x \,\}.
\]
For each $p \in P'$, the element $x$ is invertible modulo $p$, hence
\[
    p \mid (i x + y) \ \Longleftrightarrow\ i \equiv -x^{-1} y \pmod{p}.
\]
Define
\[
    U(P; x, y) := \bigcup_{p \in P'} \bigl\{\, i \in \Z_{\ge 0} \mid i \equiv -x^{-1} y \pmod{p} \,\bigr\}.
\]
Then the condition $\{0,1,\cdots,\ell\} \subset U(P;x,y)$ means that every $i \in \{0,\cdots,\ell\}$ lies in one fixed residue class modulo some $p \in P'$, i.e., a covering condition analogous to the Jacobsthal problem~\cite{Erd}.

We make this precise via a shifted-sieve lower bound.  
Let $Q := N(P')$, and for each $p \in P'$ put $a_p \equiv -x^{-1} y \pmod{p}$.  
For any squarefree $d \mid Q$, let $a_d \pmod{d}$ denote the lift given by the Chinese Remainder Theorem of the congruences $a_d \equiv a_p \pmod{p}$ for all $p \mid d$.  
For an interval $\mathcal{A} = \{m, m+1, \dots, m+X-1\}$ define
\begin{align*}
    A_d &:= \bigl|\{\, i \in \mathcal{A} \mid i \equiv a_d \pmod{d} \,\}\bigr|,\\
    S(\mathcal{A}, Q) &:= \bigl|\{\, i \in \mathcal{A} \mid i \not\equiv a_p \pmod{p} \text{ for all } p \mid Q \,\}\bigr|
    = \sum_{d \mid Q} \mu(d) A_d.
\end{align*}
Then $\bigl| A_d - X/d \bigr| \le 1$ holds for all $d \mid Q$, and $S(\mathcal{A}, Q)$ counts the
\emph{survivors} $i \in \mathcal{A}$ that avoid all forbidden residue classes $i \equiv a_p \pmod{p}$ for $p \mid Q$.

Fix $z\ge2$ to be the $|P'|$-th prime number and let $X \asymp z^2$.
By Iwaniec’s shifted sieve (Lemma~1 and Lemma~2 in~\cite{Iwa}), we have
\[
  S(\mathcal A,Q)>0,
\]
hence there exists $i\in\mathcal A$ such that
\[
  i\not\equiv a_p \pmod p \qquad\text{for all }p\in P'.
\]
Since $z\sim |P'|\log|P'|$, we obtain
\[
  X \asymp |P'|^2(\log|P'|)^2.
\]
Therefore any block $\{0,1,\dots,\ell\}\subset U(P;x,y)$ must satisfy
\[
  \ell+1 \;\le\; c\,|P'|^2(\log|P'|)^2 \;<\; c\,r^2(\log r)^2
\]
for some absolute constant $c>0$, as claimed.
\end{proof}

Having established the finiteness of $C_{r}$, we next prove the finiteness of $L_{r}$, which measures the maximal possible path length in the additive prime tree.

\begin{lemma}\label{lemma3.3}
For any $r \in \N$, we have $L_r < \infty$.
\end{lemma}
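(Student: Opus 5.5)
The plan is to combine the run-length bound of \cref{lemma3.2} with a pigeonhole argument on divisibility patterns, and to rule out repeated patterns using \cref{prop2.2}. Fix a finite type set $P$ with $|P|=r$ and a path $(x_0,y_0)\to\cdots\to(x_\ell,y_\ell)$ in some $\APT_P(x,y)$. By \cref{lem2.1} the path is encoded by a word in $L$ and $R$, and each node is $(x_0,y_0)M_i$. I would split this word into maximal runs of a single letter. A run of $R$'s starting at $(X,Y)$ visits exactly the nodes $(X,iX+Y)$, and a run of $L$'s is the same up to swapping coordinates. So every run is a chain of the kind measured by $C(P;X,Y)$, and \cref{lemma3.2} bounds its length by $C_r<\infty$, uniformly in $P$. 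It therefore suffices to bound the number of runs by a quantity depending only on $r$.

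To bound the number of runs, I would attach to every node $(x_i,y_i)$ a \emph{pattern}. This is the pair of subsets $(\{p\in P: p\mid x_i\},\{p\in P: p\mid y_i\})$, together with the information at each run switch of which prime of the two current coordinates is reused to divide the new sum. This is exactly the bookkeeping in $P_{\mathrm{cor}}$ and $P_{\mathrm{pair}}$ in Algorithm~2. The key observation is that whether a word is admissible depends only on the residues of $(x_0,y_0)$ modulo $N(P)$. The abstract pattern, in which the primes appear only as labels $p_1,\dots,p_r$, takes at most $4^r$ values at each node, and at most some $f(r)$ values once the switch data is included. Hence a path with more than $f(r)$ runs contains two run-switch nodes $(x_j,y_j)$ and $(x_{j'},y_{j'})$, with $j<j'$, that carry the same pattern. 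In particular $N_P(x_j)=N_P(x_{j'})$ and $N_P(y_j)=N_P(y_{j'})$.

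The remaining step is to upgrade this repetition to the full hypothesis of \cref{prop2.2}, applied to the subpath from $j$ to $j'$. That hypothesis asks that $(x_{j'},y_{j'})$ lie in the lattice $\{(N_1s,N_2(Ks+Mt))\}$. Equivalently, for every $p\mid M$ the points $(x_{j'}:y_{j'})$ and $(x_j:y_j)$ must coincide in $\mathbb P^1(\Z/p\Z)$. Once this holds, \cref{prop2.2} makes $P$ of infinite type, contradicting the assumption. This would give $L(P)\le (f(r)+1)(C_r+1)$ for every finite type $P$ with $|P|=r$, and hence $L_r<\infty$.

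I expect this upgrade to be the main obstacle. Equality of the divisibility sets does not control the projective classes modulo the primes $p\mid M$, and the number of such classes, $\prod_{p\mid M}(p+1)$, depends on the actual primes rather than only on $r$. My first attempt would use the fact that within one run the residues $-x^{-1}y \bmod p$ are constant, as in the proof of \cref{lemma3.2}. At a run switch, the new class at each prime is then an explicit linear-fractional function of the old class and of the run length, which is at most $C_r$. Consequently the projective state at a prime $p$ that never divides a coordinate along the path is forced to follow a symbolic orbit determined by the word alone. Adding the run lengths (at most $C_r$ each) to the pattern should therefore keep the number of symbolic states bounded in terms of $r$. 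A repetition of this enlarged symbolic state should yield the required congruences. If this fails, the fallback is to adapt the exhaustive symbolic search of Algorithm~1 to arbitrary $r$ and show it terminates at a depth bounded in terms of $r$, with every surviving branch of unbounded depth exhibiting a cycle to which \cref{prop2.2} applies.
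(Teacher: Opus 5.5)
Your route (bound each run by \cref{lemma3.2}, bound the number of runs by pigeonhole plus \cref{prop2.2}) differs from the paper's, and it breaks at exactly the step you flag. No state drawn from a set whose size depends only on $r$ can force the congruence hypothesis of \cref{prop2.2}.

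Take a prime $p\mid M$ that divides no coordinate between nodes $j$ and $j'$. Its class $t_i=(x_i:y_i)\in\mathbb{P}^1(\Z/p\Z)$ equals $t_0M_i$, where $t_0$ only has to avoid the few classes the path would send to $0$ or $\infty$. So $t_0$ ranges over roughly $p$ values, and neither the divisibility pattern nor the run lengths determine it. The condition $t_{j'}=t_j$ says that $t_j$ is a fixed point of $\gamma=M_j^{-1}M_{j'}$ modulo $p$. Unless $\gamma\equiv\pm I\pmod p$ there are at most two such points, so the condition fails for all but at most two values of $t_0$.

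For a prime that does divide a coordinate inside the segment, $t_j$ is pinned down by the word. But then $t_{j'}=t_j$ amounts to $p$ dividing a specific integer built from the word, which no bounded pattern records. So your enlarged symbolic state does not evolve deterministically, and a repetition of it implies nothing.

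Pigeonholing over the full state, namely the classes of $(x_i:y_i)$ in $\mathbb{P}^1(\Z/p\Z)$ for all $p\in P$, does make \cref{prop2.2} applicable. That state space has size $\prod_{p\in P}(p+1)$, however. It yields $L(P)<\infty$ for each individual finite type $P$, but not a bound uniform over $|P|=r$, which is the whole content of the lemma.

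Your fallback via Algorithm~1 amounts to the lemma restated. In the paper, termination of that search is deduced from \cref{main}~(2), which itself rests on this lemma, so invoking it would be circular.

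What you are missing is a source of uniformity in $P$; the paper gets it without any pumping argument. It inducts on $r$ and splits the finite type sets $Q$ with $|Q|=r+1$ according to whether $\max Q\ge F_{L_r+C_r+1}+1$. Suppose $Q=P\cup\{p\}$ with such a large $p$.
\begin{enumerate}
\item The prime $p$ must divide a coordinate, say $x_f$, within the first $L_r$ steps; otherwise that initial segment lies in a tree for $P$ and is too long.
\item What follows is at most one run keeping $x_f$ fixed, of length at most $C_r$.
\item After that, the coordinates are congruent to $B_jy_f$ and $E_jy_f$ modulo $p$, with $0<B_j,E_j\le F_j<p$ by the Fibonacci bound on entries of products of $L$ and $R$, and $p\nmid y_f$.
\item Hence $p$ divides nothing for the next $L_r+1$ steps, so that segment lies in a tree for $P$, a contradiction. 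This gives $L(Q)\le 2L_r+C_r$.
\end{enumerate}
The sets $Q$ all of whose primes lie below the bound are finitely many, and for those the non-uniform finiteness of $L(Q)$ suffices. That non-uniform finiteness is exactly what your full-state pigeonhole (or the fact that admissibility depends only on $(x,y)\bmod N(Q)$) provides. So your run decomposition is a usable ingredient, but counting runs uniformly has to be replaced by this large-prime argument or some other genuinely new input.
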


\begin{proof}
We argue by induction on $r$. The cases $r\le 4$ follow from Algorithm 1 (and the enumeration in \texttt{additive\_prime\_tree/figure/case\_|P|=4}), hence $L_r<\infty$ for $r\le 4$.

Assume inductively that $L_r<\infty$ and fix a prime
$p\geq F_{L_r+C_r+1}+1$, where $(F_k)_{k \in \Z_{\geq 0}}$ denotes the Fibonacci numbers (with $F_0=F_1=1$, so $F_{k+1}=F_k+F_{k-1}$).
Let $P\subset\Pri$ be of finite type with $|P|=r$ and $p\in \Pri \backslash P$. We claim that
\[
  L(P\cup\{p\}) \le 2L_r + C_r.
\]
Suppose, for contradiction, that $L(P\cup\{p\}) > 2L_r + C_r$.
Then there exist $x,y\in\N$ and a path $((x_i,y_i))_{0\le i\le \ell}\subset \APT_{P\cup\{p\}}(x,y)$ with
$(x_0,y_0)\to\cdots\to(x_\ell,y_\ell)$ and $\ell\ge 2L_r+C_r+1$.
Since $L(P)<\infty$ and $|P|=r$, there must be some $f\le L_r$ such that $p\mid x_f$ or $p\mid y_f$; otherwise, the first $L_r+1$ steps would already lie in $\APT_P(x,y)$, forcing $L(P)\ge L_r+1$, a contradiction. Without loss of generality, assume $p\mid x_f$.

For $j\ge 0$ we can write
\[
  (x_{f+j}, y_{f+j})
  =
  (x_f,y_f)\begin{pmatrix} A_j & D_j \\ B_j & E_j \end{pmatrix}
  \qquad A_j,B_j,D_j,E_j\in\Z_{\ge0},
\]
where the step matrices are products of $\bigl(\begin{smallmatrix}1&0\\1&1\end{smallmatrix}\bigr)$ and
$\bigl(\begin{smallmatrix}1&1\\0&1\end{smallmatrix}\bigr)$. A routine induction gives the Fibonacci bounds
$A_j,B_j,D_j,E_j \le F_j$.
Define
\[
  f'_1 := \max\{\, j\in\{0,1,\dots,\ell-f\} \mid B_j=0 \,\}.
\]
By \cref{lemma3.2}, we have $f_{1}'\le C_r$, and furthermore
$p\nmid x_{f+f'_1+1}$ and $p\nmid y_{f+f'_1+1}$.
By the choice of $f'_1$, we have $B_j>0$ for all $j\ge f'_1+1$; similarly, from the step recurrences one checks that $E_j>0$ for all $j\ge f'_2+1$.
Since
\[
  p \geq F_{L_r+C_r+1}+1 \ >\ F_{L_r+f'+1} \ \ge\ \max\{B_j,E_j\}
  \qquad (f'+1 \le j \le L_r+f'+1),
\]
where $f' = \max\{f'_1, f'_2\}$, we obtain for such $j$ that
\[
  \gcd(A_j x_f + B_j y_f,\, p) = \gcd(B_j y_f,\, p)=1,\quad
  \gcd(D_j x_f + E_j y_f,\, p) = \gcd(E_j y_f,\, p)=1,
\]
using $p\mid x_f$ and $p\nmid y_{f+f'+1}$ noted above. Hence the segment
\[
  \bigl((x_{f+f'+1+s},y_{f+f'+1+s})\bigr)_{0\le s\le L_r}
  \subset \APT_{P}(x,y).
\]
Therefore
\[
  L_r+1 \le L(P;x,y) \le L(P) \le L_r,
\]
a contradiction. We conclude that $L(P\cup\{p\}) \le 2L_r+C_r$.

Now set
\begin{align*}
  S &:= \{\, Q\subset\Pri \mid |Q|=r+1,\ Q \text{ of finite type}\,\},\\
  S_1 &:= \{\, Q\in S \mid \max Q \geq F_{L_r+C_r+1}+1 \,\},\qquad
  S_2 := S\setminus S_1.
\end{align*}
Then
\[
  L_{r+1}
  = \sup_{Q\in S} L(Q)
  = \max\Big\{ \sup_{Q\in S_1} L(Q),\ \max_{Q\in S_2} L(Q) \Big\}
  \le \max\Big\{ 2L_r+C_r,\ \max_{Q\in S_2} L(Q) \Big\}.
\]
Since $S_2$ is finite, $\max_{Q\in S_2}L(Q)<\infty$, and hence $L_{r+1}<\infty$.
This completes the induction and proves $L_r<\infty$ for all $r\in\N$.
\end{proof}

Let $P$ be a finite subset of $\Pri$ such that every proper subset of $P$ is of finite type and $|P| = r \geq 4$. 
From the proof of \cref{lemma3.3}, $P$ is not a primitive set if $p > F_{L_{r-1}+C_{r-1}+1}+1$ for some $p \in P$. 
Therefore, there exist only finitely many primitive sets $P$ with $|P| = r$. 
Hence \cref{main} (2) is proved. \\

We now proceed to the proof of \cref{main} (3). 
By \cref{prop2.4}, the property of being of infinite type is preserved under inclusion; in particular, 
if a finite subset $Q \subset \Pri$ contains some primitive set $P$, then $Q$ is of infinite type. 
Equivalently, $Q$ is of finite type if and only if it contains no primitive set.

For a finite subset $P \subset \Pri$, define
\begin{align*}
    \mC(P) 
    &:= \lim_{N \to \infty} \frac{1}{N}\,\big|\{\, M \le N \mid P(M) = P \,\}\big| \\
    &= \bigg(\prod_{p \in P} \frac{1}{p^2}\bigg)\bigg(\prod_{p \notin P}\Big(1-\frac{1}{p^2}\Big)\bigg)
     \;=\; \frac{6}{\pi^2}\,\prod_{p \in P}\frac{1}{p^2-1}.
\end{align*}
Then
\[
    \lim_{N \to \infty} \frac{|\mA(N)|}{N} \;=\; 1 \;-\; \sum_{P:\,\text{infinite type}} \mC(P).
\]

Since a complete list of primitive sets is out of reach, we derive upper and lower bounds. 
For the upper bound we use that every infinite-type set contains a primitive set:
\begin{align*}
    \lim_{N \to \infty} \frac{|\mA(N)|}{N} 
    \;\le\; 1 - \sum_{P \in \Prim} \mC(P) 
    \;\le\; 1 - \sum_{\substack{P \in \Prim\\ \text{known}}} \mC(P)
    \;=\; 0.99999991529\cdots
\end{align*}
For the lower bound,
since all primitive sets with $|P|\le 6$ have been completely classified,
and those with $|P|=7$ and $3\notin P$ are also classified,
we may incorporate these contributions explicitly.
Moreover, taking into account that $3\in P$ necessarily holds
for primitive sets with $|P|=5$, we obtain
\begin{align*}
    \lim_{N \to \infty} \frac{|\mathcal{A}(N)|}{N}
    &\geq \sum_{\substack{P \subset \Pri\\ |P| \le 6}} \mathcal{C}(P)
      + \sum_{\substack{P \subset \Pri\\ |P| = 7\\ 3 \notin P}} \mathcal{C}(P)
      - \sum_{P \in \Prim_{5} \cup \Prim_{6}} \mathcal{C}(P) \\
    &\quad
      - \sum_{\substack{P \in \Prim_{7}\\ 3 \notin P}} \mathcal{C}(P)
      - \sum_{P \in \Prim_{5}}\ \sum_{p \in \Pri\backslash P} \mathcal{C}(P \cup \{p\})
      - \sum_{\substack{P \in \Prim_{6}\\ 3 \notin P}}
        \sum_{p \in \Pri\backslash(P \cup \{3\})} \mathcal{C}(P\cup\{p\}) \\
    &= 0.99999991254\cdots.
\end{align*}

Hence we obtain \cref{main} (3). \hfill$\square$

\section{Application} \label{app}

In this section, we investigate the connection between the finite-type property and the structure of modular forms. We follow the notation and framework of Dickson and Neururer~\cite{DN}, and prove \cref{thm1.6}, which generalizes their Theorems~1.2 and~1.3.
We begin by recalling the relevant definitions and results from~\cite{DN}.

We define the slash operator by
\[
    \bigg(f\big|_{k} 
    \begin{pmatrix} a & b \\ c & d \end{pmatrix}
    \bigg)(z)
    := (ad-bc)^{k/2}(cz+d)^{-k}\,
    f\!\left(\frac{az+b}{cz+d}\right).
\]
We will write simply $\mid$ for $\mid_{k}$ when the weight $k$ is clear from context.

We consider the Eisenstein series
\begin{align*}
    E_{k}^{\phi,\psi}(z)
    &= e_{k}^{\phi,\psi}
    + 2\sum_{n\ge1}\sigma_{k,\phi,\psi}(n)q^{n}
    \in \mM_{k}(\Gamma_{0}(N),\phi\psi),
\end{align*}
where $q=e^{2\pi i z}$, and $\phi,\psi$ are primitive Dirichlet characters of levels $N_{1}$ and $N_{2}$ with $N_{1}N_{2}=N$.  
The coefficients are given by
\[
    \sigma_{k,\phi,\psi}(n)
    = \sum_{d\mid n}\phi(n/d)\psi(d)d^{k-1},
\]
and the constant term is
\[
    e_{k}^{\phi,\psi} =
    \begin{cases}
        L(\psi,1-k), & N_{1}=1,\\[4pt]
        L(\phi,0), & N_{2}=1,\ k=1,\\[4pt]
        0, & \text{otherwise.}
    \end{cases}
\]

The space $\mM_{k}(\Gamma_{0}(N))$ decomposes as a direct sum
\[
    \mM_{k}(\Gamma_{0}(N))
    = \mS_{k}(\Gamma_{0}(N))
      \oplus
      \mE_{k}(\Gamma_{0}(N)),
\]
with respect to the Petersson inner product,  
where $\mS_{k}(\Gamma_{0}(N))$ denotes the space of cusp forms and $\mE_{k}(\Gamma_{0}(N))$ denotes the Eisenstein subspace.

Let $B_{d}$ denote the lifting operator defined by
\[
    (f|B_{d})(z) = d^{k/2}f(dz).
\]
For even $k\ge2$, we define $\mQ_{k}(N)\subset\mM_{k}(\Gamma_{0}(N))$ to be the subspace generated by all products of the form
\[
    E_{\ell}^{\phi,\psi}|B_{d_{1}d}
    \;\cdot\;
    E_{k-\ell}^{\bar\phi,\bar\psi}|B_{d_{2}d},
\]
which lie in $\mM_{k}(\Gamma_{0}(N))$ under the additional condition that $d_{1}M_{1}$ divides the squarefree part of $N$.  
Here $\ell\in\{1,\dots,k-1\}$, $\phi$ and $\psi$ are primitive characters modulo $M_{1}$ and $M_{2}$ satisfying $\phi\psi(-1)=(-1)^{\ell}$,  
and $d_{1},d_{2}\in\N$ satisfy $d_{1}M_{1}d_{2}M_{2}d\mid N$.  
We exclude the cases $(\phi,\psi,\ell)=(\bo,\bo,2)$ and $(\bo,\bo,k-2)$, where $\bo$ denotes the trivial character, since these choices do not yield modular forms.  
The condition on $d_{1}M_{1}$ implies that $\gcd(d_{1}M_{1},d_{2}M_{2}d)=1$. Dickson and Neururer proved \Cref{thm1.1,thm1.2}.

Let us now give a proof of \cref{thm1.6}. 
Let $N \in \N$. 
For each divisor $M \mid N$, denote by $D(M)$ the set of primitive Dirichlet characters modulo $M$. 
Define
\[
    B(N) \subset \bigsqcup_{M \mid N} D(M) \times \{1,2,\dots,k-1\}
\]
to be the set of pairs $(\alpha, \ell)$ satisfying
\begin{align*}
    \alpha(-1) &= (-1)^{\ell},\\
    (\alpha, \ell) &\neq (\bo,2),\ (\bo,k-2).
\end{align*}

We define $P_{k}(N) \subset \mM_{k}(\Gamma_{0}(N))$ to be the subspace generated by the products
\[
    \big(E_{\ell}^{\bo, \alpha}\,E_{k-\ell}^{\bo, \overline{\alpha_{N}}}\big)\big|W_{S}^{N},
\]
for all $(\alpha,\ell)\in B(N)$ and all subsets $S$ of the set of prime divisors of the squarefree part of $N$. 
Here $\alpha_{N}$ denotes the extension of $\alpha$ to a character modulo $N$, 
and $W_{S}^{N}$ denotes the Atkin–Lehner involution associated to $S$.

Dickson and Neururer proved that the projection of $P_k(N)$ onto the cuspidal subspace coincides with $\mS_{k}^{\mathrm{new}}(\Gamma_{0}(N))$.
To establish this, they employed several criteria relating the vanishing of twisted $L$-values of a modular form $f$ to the vanishing of $f$ itself. Let $f \in \mS_{k}(\Gamma_{0}(N), \chi)$ with Fourier expansion 
\[
    f(z) = \sum_{n \ge 1} a_n e^{2\pi i n z},
\]
and let $\alpha$ be a Dirichlet character modulo $M$. 
We define the twist of $f$ by $\alpha$ as
\[
    f_{\alpha}(z) := \sum_{n \ge 1} a_n \alpha(n) e^{2\pi i n z}.
\]

We recall some facts from the theory of modular symbols 
(for details, see \cite{Mer} and \cite{Ste}). 
The space $\M_{k}(\Gamma_{1}(N))$ of modular symbols is generated by the Manin symbols $[P,g]$, 
where $P$ is a homogeneous polynomial in $\C[X,Y]$ of degree $k-2$ and $g \in \SL_{2}(\Z)$. 
The symbol $[P,g]$ depends only on $P$ and the coset $\Gamma_{1}(N)g$. 
Mapping a matrix $g$ to its bottom row modulo $N$ induces a bijection
\[
    \Gamma_{1}(N)\backslash \SL_{2}(\Z)
    \;\longleftrightarrow\;
    E_{N} := \{(u,v)\in (\Z/N\Z)^{2} \mid \gcd(u,v,N)=1\}.
\]
Note that $\gcd(u,v,N)$ is well defined, i.e., independent of the choice of representatives for $u,v$. 
For $(u,v)\in E_N$, we write
\[
    [P,(u,v)] := [P,g],
\]
for any $g\in\SL_{2}(\Z)$ whose bottom row is congruent to $(u,v)$ modulo $N$. For $f\in\mS_{k}(\Gamma_{1}(N))$, we define
\[
    \xi_{f}([P,g])
    := \int_{g\cdot 0}^{g\cdot \infty} f(z)\,(gP)(z,1)\,dz,
\]
where $g$ acts on $P\in\C[X,Y]$ by
\[
    (gP)(X,Y) := P\big(g^{-1}(X,Y)^{{T}}\big).
\]

Using the generators $[X^{j}Y^{k-2-j},(u,v)]$ with $0\le j\le k-2$ and $(u,v)\in E_N$, 
we write
\[
    \xi_{f}(j;u,v)
    := \xi_{f}\big([X^{j}Y^{k-2-j},(u,v)]\big).
\]
By Theorem~8.4 of \cite{Ste}, the following relations hold:
\begin{align}
    &\xi_{f}(j;u,v) + (-1)^{j}\,\xi_{f}(k-2-j;v,-u) = 0,\label{(4.1)}\\[4pt]
    &\xi_{f}(j;u,v)
    + \sum_{i=0}^{k-2-j} (-1)^{k-2-i}\binom{k-2-j}{i}\,\xi_{f}(i;v,-u-v)
    &\notag\\
    & \qquad+ \sum_{i=k-2-j}^{k-2} (-1)^{i}\binom{j}{i-k+2+j}\,\xi_{f}(i;-u-v,u)
    = 0, \label{(4.2)}\\[4pt]
    &\xi_{f}(j;u,v) - (-1)^{k-2}\,\xi_{f}(j;-u,-v) = 0.\label{(4.3)}
\end{align}

We also define the $\xi_{f}^{\pm}$ by
\[
    \xi_{f}^{\pm}(j;u,v)
    := \frac{1}{2}\bigl(\xi_{f}(j;u,v) \pm (-1)^{j+1}\xi_{f}(j;-u,v)\bigr).
\]
Applying $\xi_{f}$ to the Manin relations and taking suitable linear combinations, 
we see that (\ref{(4.1)})–(\ref{(4.3)}) remain valid when $\xi_{f}$ is replaced by $\xi_{f}^{\pm}$.

By Proposition~8 of \cite{Mer}, 
the maps $f \mapsto \xi_{f}^{+}$ and $f \mapsto \xi_{f}^{-}$ are injective; 
hence $f$ vanishes if and only if all $\xi_{f}^{\pm}$ vanish.
The values $\xi_{f}(j;u,v)$ are closely related to the critical values of the $L$-function of $f$:  
indeed, for $g = \bigl(\begin{smallmatrix} a & b \\ c & d \end{smallmatrix}\bigr) \in \SL_{2}(\Z)$ 
with $(c,d)\equiv(u,v)\pmod N$, we have
\[
    \xi_{f}(j;u,v)
    = \frac{j!}{(-2\pi i)^{j+1}}\,L(f|g, j+1).
\]

Dickson and Neururer proved the following theorems. In particular, \cref{thm4.2} is closely related to the finiteness of additive prime trees (APT).

\begin{theorem}[\cite{DN}, Thm.~3.2]\label{thm4.1}
Let $\epsilon \in \{0,1\}$, $N \in \N$, $k \ge 2$, and let $f \in \mS_{k}^{\mathrm{new}}(\Gamma_{0}(N))$ be an eigenfunction of all Atkin–Lehner operators $W_{p}^{N}$ for $p \in T$, where $T$ is the set of primes with $v_{p}(N)=1$. 
Assume that
\[
L\big(f_{\alpha}\big|W_{S}^{NM},\, j+1\big)=0
\]
for every primitive character $\alpha \pmod M$ with $M\mid N$, every $0\le j\le k-2$ satisfying $\alpha(-1)=(-1)^{j+\epsilon}$, and every subset $S\subset \overline{T}$ of prime divisors of $N/M$, where $\overline{T}$ is the set of primes with $v_{p}(N)\geq 2$. 
Then $f=0$.
\end{theorem}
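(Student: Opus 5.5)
The plan is to argue by contradiction, combining the Manin relations with the linear relations that the hypothesis forces on the modular symbols. Suppose $f\neq 0$. By Proposition~8 of \cite{Mer} it suffices to show that $\xi_f^{\pm}(j;u,v)=0$ for all $j$ and all $(u,v)\in E_N$, where the sign is chosen according to $\epsilon$.

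\textbf{Step 1: reduce to values in which $v$ is a unit.} I would start from the twisted Birch formula. For a primitive $\alpha\pmod M$, the value $L(f_\alpha,j+1)$ is a Gauss-sum-weighted combination $\sum_{a\bmod M}\bar\alpha(a)\,L(f|\bigl(\begin{smallmatrix}1&a/M\\0&1\end{smallmatrix}\bigr),j+1)$. Each matrix $\bigl(\begin{smallmatrix}1&a/M\\0&1\end{smallmatrix}\bigr)$ is then expressed through $\SL_2(\Z)$-matrices with bottom row $(M,\cdot)$, and $W_S^{NM}$ is applied. The vanishing hypotheses, taken over all $\alpha$ with $\alpha(-1)=(-1)^{j+\epsilon}$ and all $S\subset\overline T$, can then be inverted by orthogonality of characters. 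The goal is to conclude that $\xi_f^{\pm}(j;u,v)=0$ whenever $(u,v)$ is a "boundary" pair: $\gcd(u,N)$ is supported on $\overline T$ (after using the Atkin--Lehner eigenproperty at primes of $T$) and $v$ is coprime to the relevant part. This is essentially the mechanism of \cite{DN}. The symmetry \eqref{(4.3)} and the $\pm$ projection account for the parity restriction on $\alpha$.

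\textbf{Step 2: propagate the vanishing by induction.} Next I would use the three-term relation \eqref{(4.2)} together with \eqref{(4.1)}. If $\xi^{\pm}_f$ vanishes on two of the three pairs $(u,v)$, $(v,-u-v)$, $(-u-v,u)$ for every $j$, then, since the binomial system in $j$ is triangular, it vanishes on the third. Moving to $\xi$ on $(u,u+v)$ or $(u+v,v)$ is exactly the branching step of \cref{def1.3} after the sign changes permitted by \eqref{(4.1)} and \eqref{(4.3)}.

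The induction runs on pairs $(u,v)$ lifted to $\N^2$ with $\gcd(u,v,N(P))=1$, where $P$ is the set of primes with $v_p(N)\ge2$. A pair is already handled by Step 1 unless both coordinates are divisible by primes of $P$. For such a "bad" pair, I would take a pair $(u,v)$ with $v$ not divisible by any prime of $P$ and show that the bad pair is reached from it. Conversely, each bad pair lies in some tree $\APT_P(x,y)$, where $(x,y)$ is a pair whose predecessor has a good coordinate. Because $P$ is of finite type, $\APT_P(x,y)$ is finite, so induction from the leaves downward is well founded. At a leaf, one child has a coordinate coprime to $N(P)$ and is therefore known to vanish. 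Squarefree primes are handled by the Atkin--Lehner part as in \cite{DN}.

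\textbf{Main obstacle.} The hardest point is making the correspondence between Manin-symbol pairs modulo $N$ and positive integer pairs precise. The prime-power exponents $a_p$ only enter through divisibility by $p$, so the tree must be run on residues for $N(P)$ while the remaining coordinates are carried along; the Chinese remainder theorem gives positive lifts. In addition, the recursion must terminate in every branch. That is exactly what the finiteness of $\APT_P$ supplies, and it replaces the ad hoc two-prime argument of \cite{DN}.
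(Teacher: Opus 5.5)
\textbf{There is a genuine gap.} Your Step~2 depends on $\APT_P(x,y)$ being finite, i.e.\ on $P=\{p : v_p(N)\ge 2\}$ being of finite type. \Cref{thm4.1} assumes nothing of the kind: it is stated for every $N\in\N$. Take $N=(2\cdot 3\cdot 5\cdot 7\cdot 19)^2$. Then $P$ is of infinite type by \cref{ex2.3}, the tree has infinite paths, and your induction ``from the leaves downward'' is not well founded. So the proposal does not prove the statement. What you have sketched is essentially the mechanism of \cref{thm4.2,thm4.3}. Their hypotheses involve only $S=\emptyset$, which is why they need the Manin-relation propagation, and \cref{thm4.3} additionally needs the finite-type assumption.

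\textbf{The missing idea.} The hypothesis of \cref{thm4.1} is much stronger than you use in Step~1. It includes the Atkin--Lehner-twisted values $L(f_\alpha|W_S^{NM},j+1)$ for \emph{all} $S\subset\overline{T}$, and these control every pair $(u,v)\in E_N$ directly, not only the ``boundary'' ones. The paper's proof runs as follows.
\begin{enumerate}
\item Fix $(u,v)\in E_N$ and let $S$ be the set of primes of $\overline{T}$ dividing $u$. The primes of $T$ are absorbed by the Atkin--Lehner eigenvalues of $f$.
\item Using the Birch-type unfolding with Gauss sums and orthogonality of characters, write $\xi^{\pm}_{f|W_N}(j;u,v)$, with the sign chosen according to $\epsilon$, as a finite linear combination of the values $L(f_\alpha|W_S^{NM},j+1)$ with $\alpha(-1)=(-1)^{j+\epsilon}$.
\item These values vanish by hypothesis, so $\xi^{\pm}_{f|W_N}\equiv 0$, and $f=0$ by the injectivity of $f\mapsto\xi_f^{\pm}$.
\end{enumerate}
In particular, the ``bad'' pairs of your Step~2, where both $u$ and $v$ are divisible by primes of $\overline{T}$, are exactly the pairs handled by a nonempty $S$. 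No three-term relation \eqref{(4.2)}, no trees and no finiteness are needed. The tree argument only becomes necessary once the $W_S$-twisted hypotheses are dropped, as in \cref{thm4.3}.
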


\begin{proof}
We treat the case $\epsilon=1$; the case $\epsilon=0$ is analogous with $\xi_f^{-}$ in place of $\xi_f^{+}$. 
Fix $(u,v)\in E_{N}$ and consider $\xi^{+}_{f|W_{N}}(j;u,v)$. 
Let $S$ be the set of prime divisors of $N$ that divide $u$. 
Using the Manin relations (\ref{(4.1)})–(\ref{(4.3)}), the action of Atkin–Lehner operators, and the standard unfolding of the modular–symbol integral together with orthogonality of Dirichlet characters (via Gauss sums), one expresses $\xi^{+}_{f|W_{N}}(j;u,v)$ as a finite $\C$–linear combination of twisted critical $L$–values
\[
L\big(f_{\alpha}\big|W_{S}^{NM},\, j+1\big),
\]
with $\alpha \pmod M$ primitive, $M\mid N$, $0\le j\le k-2$, and $\alpha(-1)=(-1)^{j+1}$, and with $S\subset \overline{T}$ supported on the primes dividing $N/M$. 
By the hypothesis of the theorem, all these $L$–values vanish, hence $\xi^{+}_{f|W_{N}}(j;u,v)=0$ for every $(u,v)\in E_{N}$ and every $0\le j\le k-2$. 
By the injectivity of $f\mapsto \xi_f^{+}$, we conclude $f=0$. 
\end{proof}

The following theorem extends \cref{thm4.1} to the case where 
the level $N$ has at most two prime powers in its factorization.

\begin{theorem}[\cite{DN} Theorem 3.4.]\label{thm4.2}
    Let $\epsilon \in \{0,1\}$ and let $N = p^{a}q^{b}N'$, where $p$ and $q$ are distinct primes, $a,b \in \Z_{\geq0}\setminus\{1\}$, and $N'$ is squarefree and coprime to $pq$.  
    Let $k \geq 2$, and let $f \in \mS_{k}^{\mathrm{new}}(\Gamma_{0}(N))$ be an eigenfunction of all Atkin–Lehner operators $W_{\ell}^{N}$ for primes $\ell \mid N'$.  
    Assume that $L(f_{\alpha}, j+1) = 0$ for all primitive characters $\alpha$ modulo $M \mid N$ and all $j = 0,\ldots,k-2$ satisfying $\alpha(-1) = (-1)^{j+\epsilon}$.  
    Then $f = 0$.
\end{theorem}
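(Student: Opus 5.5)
The plan is to work with modular symbols and prove that $\xi_f^{\pm}(j;u,v)=0$ for every $(u,v)\in E_N$ and every $0\le j\le k-2$. Injectivity of $f\mapsto\xi_f^{\pm}$ (Proposition~8 of \cite{Mer}) then gives $f=0$. We treat $\epsilon=1$ and $\xi_f^{+}$; the case $\epsilon=0$ is the same with $\xi_f^{-}$. The argument has two stages. First, a \emph{base case} obtains vanishing directly from the twisted $L$-values. Second, a \emph{propagation} step uses the Manin relations (\ref{(4.1)})--(\ref{(4.3)}) and is governed by the additive prime tree $\APT_{\{p,q\}}$.

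\emph{Base case.} Call $(u,v)\in E_N$ \emph{good} if one of $u,v$ is prime to $pq$. By (\ref{(4.1)}) we may assume $\gcd(v,pq)=1$ after swapping, and here we do not yet need $(u,v)$ to be a leaf. Write $u=M'u'$ with $M'=\gcd(u,N)$. The unfolding of the modular-symbol integral used in the proof of \cref{thm4.1}, together with orthogonality of Dirichlet characters via Gauss sums, expresses $\xi_f^{+}(j;u,v)$ as a finite linear combination of the values $L(f_\alpha\mid W_S^{NM},j+1)$ with $\alpha(-1)=(-1)^{j+1}$. The Atkin--Lehner eigen-property at the primes $\ell\mid N'$ turns the $W_S$ for $S\subset\{\ell\mid N'\}$ into scalars. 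The hypothesis $a,b\ne1$ should then let us drop the $W_S$ at $p$ and $q$ as well. For primes with $v_p(N)\ge2$ and $p\mid M$, the twist already lives at full level, so no Atkin--Lehner conjugate appears. The role of $v_p(N)=1$ in \cref{thm4.1} is exactly what is excluded here. Hence every good $(u,v)$ has $\xi_f^{+}(j;u,v)=0$ for all $j$.

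\emph{Propagation.} Now let $(u,v)$ be \emph{bad}, i.e.\ $p\mid u$ and $q\mid v$ (or the reverse), with $\gcd(u,v,N)=1$. This is exactly condition (ii) of \cref{def1.3} for $P=\{p,q\}$. Combine the three-term relation (\ref{(4.2)}) with (\ref{(4.1)}) and (\ref{(4.3)}). This rewrites it as the classical period relation for $T=R$ and $L$: for each $j$, $\xi_f^{+}(j;u,v)$ equals a fixed linear combination, with binomial coefficients, of the values $\xi_f^{+}(i;u,u+v)$ and $\xi_f^{+}(i;u+v,v)$, $0\le i\le k-2$. The children $(u,u+v)$ and $(u+v,v)$ are exactly the successors of $(u,v)$ in $\APT_{\{p,q\}}(u,v)$. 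By \cref{main}(1), $\{p,q\}$ is of finite type. We lift residues to positive integers, which is harmless since $\xi$ only depends on $(u,v)\bmod N$ and the $p,q$-divisibility pattern is preserved. Then the tree below $(u,v)$ is finite, and we induct on the height of a node in this tree. A leaf fails the branching condition. Since $\gcd(X,Y,pq)=1$ is inherited along the tree, a leaf has one coordinate prime to $pq$, so it is good and vanishes by the base case. Each internal node is a linear combination of its two children, so it vanishes by the inductive hypothesis. This covers all of $E_N$.

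\emph{Main obstacle.} I expect the base case to be the hardest step. One must check carefully that the Gauss-sum unfolding produces only the untwisted-by-$W$ values $L(f_\alpha,j+1)$ when $a,b\ne1$. In particular, the case where $M'$ contains a full prime power $p^a$, versus a partial one, has to be handled separately: one has to verify that no $W_p$-conjugated twist survives when $p^2\mid N$. I would first try to mimic the computation in \cite{DN}, \S3, writing $\xi$ at $(M'u',v)$ as $\sum_\alpha c_\alpha\,\overline{\alpha}(v)\,L(f_\alpha,j+1)$. A secondary technical point is getting the exact form of the period relation from (\ref{(4.2)}), since its index shift in $j$ must be tracked. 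However, only the fact that each bad node is a linear combination of its two children matters, so the precise coefficients are irrelevant.
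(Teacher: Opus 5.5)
Your overall architecture matches the paper's. You first show $\xi^{+}$ vanishes at pairs with an entry prime to $pq$, then reach pairs with $p\mid u$, $q\mid v$ through a Manin relation. The gap is in how you justify the base case.

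\textbf{The base-case mechanism is misidentified.} In the unfolding behind \cref{thm4.1}, the Atkin--Lehner conjugates that can survive are the $W_S^{NM}$ with $S\subset\overline{T}$. Here $S$ is the set of primes of $\overline{T}$ dividing one specific entry of $(u,v)$; in the paper's normalization that entry is $u$.
\begin{itemize}
\item The hypothesis $a,b\neq 1$ is exactly what puts $p$ and $q$ into $\overline{T}$ rather than $T$. So it is the reason $W_p$ and $W_q$ can appear at all; it cannot be what removes them.
\item Your remark that ``the role of $v_p(N)=1$ is exactly what is excluded here'' misreads \cref{thm4.1}. There, the primes with $v_p(N)=1$ are the harmless ones, made scalar by the eigen-hypothesis. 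The residual $W_S^{NM}$ live at primes with $v_p(N)\ge 2$.
\item Your remark about twists at full $p$-level only covers $v_p(M)=v_p(N)$.
\item As written, your mechanism for discarding $W_p,W_q$ never uses the coprimality of the good entry. It would therefore apply verbatim to every $(u,v)\in E_N$. That would make your propagation step superfluous and give the result for every level, including \cref{thm4.3} for infinite-type $P$, which is beyond the paper's method. The check you plan in your ``main obstacle'' paragraph would fail when $p$ or $q$ divides the entry that controls $S$.
\end{itemize}

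\textbf{The fix is the paper's mechanism.} Use (\ref{(4.1)}) to move the entry prime to $pq$ into the position that determines $S$. Then $S=\emptyset$, only the values $L(f_\alpha,j+1)$ occur, and these vanish by hypothesis. In the paper's normalization this means making $u$ prime to $pq$; you instead arranged $\gcd(v,pq)=1$ and let $u$ carry the $p,q$-part. Also work with $\xi^{+}_{f|W_N}$, as the paper does, rather than $\xi^{+}_{f}$. The symbols of $f|W_N$ are the ones that unfold into twists of $f$ itself, and injectivity applied to $f|W_N$ still gives $f=0$.

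\textbf{Propagation is correct.} It is the tree argument of \cref{thm4.3} specialized to $P=\{p,q\}$. Here the tree has depth one: $p\mid u$, $q\mid v$ and $\gcd(u,v,N)=1$ force $u+v$ to be prime to $pq$, so both children are already good. The paper uses exactly this, with a single application of (\ref{(4.2)}) to $(v,-u-v)$ and $(-u-v,u)$. Appealing to \cref{main}(1) and inducting on height is correct but unnecessary.
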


\begin{proof}
We may assume that $a > 1$ or $b > 1$.  
If no prime in $\overline{T}$ divides $u$, then the proof of \cref{thm4.1} implies $S = \emptyset$, so
\[
    L(f_{\alpha}|W_{S}^{NM}, j+1) = L(f_{\alpha}, j+1)
    \quad\text{and hence}\quad
    \xi_{f|W_{N}}^{+}(j;u,v) = 0.
\]
Similarly, for $(v,-u) \in E_{N}$, if no prime in $\overline{T}$ divides $v$, then
$\xi_{f|W_{N}}^{+}(k-2-j;v,-u) = 0$, and by the modular-symbol relation (\ref{(4.1)}) we again obtain
$\xi_{f|W_{N}}^{+}(j;u,v) = 0$.

Now suppose that both $u$ and $v$ are divisible by primes in $\overline{T}$.  
Since $(u,v) \in E_{N}$, they cannot share a common such prime.  
Thus we are in the case $a>1$ and $b>1$, and may assume that $p \mid u$ and $q \mid v$.  
Then the residue class $-u-v$ is divisible by neither $p$ nor $q$, so by the previous argument we have
\[
    \xi_{f|W_{N}}^{+}(i;v,-u-v)
    = \xi_{f|W_{N}}^{+}(i;-u-v,u) = 0
    \quad\text{for all } 0 \le i \le k-2.
\]
Applying the modular-symbol relation (\ref{(4.2)}), it follows that
\[
    \xi_{f|W_{N}}^{+}(j;u,v) = 0
    \quad\text{for all } (u,v) \in E_{N}.
\]
Hence $f = 0$.
\end{proof}

By using the modular-symbols relations, we can observe that the structure of additive prime trees (APT) 
is closely related to the vanishing criteria in the previous theorem.  
We now extend \cref{thm4.2} to general levels associated with finite type sets.

\begin{theorem}\label{thm4.3}
    Let $P$ be a finite subset of $\Pri$.  
    Write $N = N' \prod_{p \in P} p^{a_{p}}$, where each $a_{p} \ge 2$, 
    $N'$ is squarefree, and $\gcd(N', N(P)) = 1$.  
    If $P$ is of finite type, then the same conclusion as in \cref{thm4.2} holds for $N$.
\end{theorem}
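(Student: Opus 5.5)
We argue as in the proof of \cref{thm4.2}: we show that $\xi^{+}_{f|W_N}(j;u,v)=0$ for every $(u,v)\in E_N$ and every $0\le j\le k-2$ (the case $\epsilon=0$ is the same with $\xi^{-}$), and then conclude $f=0$ by the injectivity of $f\mapsto\xi_f^{\pm}$. Here $\overline{T}=P$. The starting point is the observation already made in the proof of \cref{thm4.2}. If $u$ is divisible by no prime of $P$, the reduction in the proof of \cref{thm4.1} gives $S=\emptyset$. The relevant $L$-values are then the untwisted-by-Atkin--Lehner values $L(f_\alpha,j+1)$, which vanish by hypothesis, so $\xi^{+}_{f|W_N}(j;u,v)=0$ for all $j$. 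Similarly, if $v$ is divisible by no prime of $P$, relation (\ref{(4.1)}) gives the vanishing for all $j$. Call $(u,v)$ \emph{good} if all $\xi^{+}_{f|W_N}(j;u,v)$ vanish. Since these conditions depend only on residues, we may lift $u,v$ to positive integers and regard $(u,v)$ as a pair $(x,y)\in\N^2$ with $\gcd(x,y,N(P))=1$.

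\textbf{Propagation rule.} Relation (\ref{(4.2)}) expresses $\xi^{+}(j;u,v)$ through the values at $(v,-u-v)$ and $(-u-v,u)$. Using (\ref{(4.3)}) and the symmetry $\xi^{+}(j;-u,v)=\pm\xi^{+}(j;u,v)$ coming from the definition of $\xi^{+}$, sign changes of the entries are harmless. Together with (\ref{(4.1)}), which swaps the entries, this shows that $(x,y)$ is good provided both $(x,x+y)$ and $(x+y,y)$ are good, up to swapping and signs. This is exactly the branching rule of \cref{def1.3}.

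\textbf{Induction on the tree.} Consider a pair $(x,y)$ that is not trivially good, i.e.\ some $p\in P$ divides $x$ and some $p'\in P$ divides $y$. Then its two children in $\APT_P(x,y)$ are precisely $(x,x+y)$ and $(x+y,y)$. Any node without children has a coordinate prime to $N(P)$, so it is good by the first paragraph. Because $P$ is of finite type, $\APT_P(x,y)$ is finite. Moreover $X+Y$ strictly increases along arrows, so the tree has no cycles. We may therefore run induction from the leaves back to the root: every node is good once its children are. Hence $(x,y)$ is good, so every $(u,v)\in E_N$ is good, and $f=0$.

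\textbf{Main obstacle.} The key step is checking that the modular-symbol identities really reduce goodness of $(u,v)$ to goodness of exactly $(u,u+v)$ and $(u+v,v)$, the tree children, rather than to $(v,-u-v)$ and $(-u-v,u)$. This needs a careful bookkeeping of signs and swaps, using (\ref{(4.1)}), (\ref{(4.3)}) and the $\pm$-symmetry; in particular, for each fixed pair we need the statement for all $j$ simultaneously. A second point to check is that $\gcd(\cdot,\cdot,N)=1$ is preserved along the tree. This holds because $\gcd(x,x+y)=\gcd(x,y)$. Finally, we must confirm that choosing integer lifts does not affect which primes of $P$ divide the coordinates. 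It does not, since divisibility by $p\mid N$ depends only on the residue modulo $N$.
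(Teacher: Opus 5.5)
Your proposal is correct and is essentially the paper's proof. You rewrite (\ref{(4.2)}) as an expression for $\xi^{+}_{f|W_N}(j;u,v)$ in terms of the values at the tree children $(u+v,v)$ and $(u,u+v)$. You then use finiteness of $\APT_P$ to reduce everything to pairs with a coordinate prime to $N(P)$, which vanish by the argument of \cref{thm4.2}.

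The bookkeeping you flag as the main obstacle is immediate. Already (\ref{(4.1)}) with $k$ even gives
$\xi^{+}(i;v,-u-v)=-(-1)^{i}\xi^{+}(k-2-i;u+v,v)$ and $\xi^{+}(i;-u-v,u)=-(-1)^{i}\xi^{+}(k-2-i;u,u+v)$, which is exactly the identity the paper displays. Your induction on the integer tree is, if anything, cleaner than the paper's passage to the reduction mod $N$.
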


\begin{proof}
    Suppose that $P$ is of finite type and $N = N'\prod_{p \in P}p^{a_{p}}$ with $a_{p} \ge 2$.  
    Then the tree $\APT_{P}(x,y)$ is finite for all $(x,y) \in \N^{2}$ with $\gcd(x,y,N) = 1$.  
    Define
    \[
        \widetilde{\APT_{P}}(x,y)_{N} 
        := \bigl\{\, (u,v) \in (\Z/N\Z)^{2} 
        \mid (u,v) \equiv (X,Y) \pmod N \text{ for some } (X,Y) \in \APT_{P}(x,y) \,\bigr\}.
    \]
    We introduce a relation $\to$ on $\widetilde{\APT_{P}}(x,y)_{N}$ by
    \[
        (X, Y) \to (Z, W)
        \iff (Z,W) = (X, X+Y)\ \text{or}\ (X+Y, Y),
    \]
    for $(X, Y), (Z, W) \in \widetilde{\APT_{P}}(x,y)_{N}$.  
    Since $P$ is of finite type, the directed graph $\widetilde{\APT_{P}}(x,y)_{N}$ contains no cycles for any $(x,y) \in \N^{2}$, 
    and every such pair satisfies $\gcd(u,v,N)=1$, so $\widetilde{\APT_{P}}(x,y)_{N} \subset E_{N}$.

    Let $(u,v) \in E_{N}$.  
    From the modular-symbol relations (\ref{(4.1)})–(\ref{(4.3)}), we have
    \begin{align*}
        \xi_{f|W_{N}}^{+}(j;u,v)
        &= \sum_{i = 0}^{k-2-j}
            \binom{k-2-j}{i}\,
            \xi_{f|W_{N}}^{+}(k-2-i;u+v,v) \\
        &\quad + \sum_{i = k-2-j}^{k-2}
            \binom{j}{i-k+2+j}\,
            \xi_{f|W_{N}}^{+}(k-2-i;u,u+v).
    \end{align*}
    Because $P$ is of finite type, there exist finitely many pairs
    $((u_s,v_s))_{1\le s \le t} \subset E_{N}$
    such that $\gcd(u_s, N(P)) = 1$ or $\gcd(v_s, N(P)) = 1$ holds, and
    \[
        \xi_{f|W_{N}}^{+}(j;u,v)
        = \sum_{s=1}^{t}\sum_{i=0}^{k-2}
            \beta_{i,j,s}\, \xi_{f|W_{N}}^{+}(i;u_{s},v_{s})
    \]
    for some coefficients $\beta_{i,j,s} \in \Z$.  
    By the argument of \cref{thm4.2}, each $\xi_{f|W_{N}}^{+}(i;u_{s},v_{s}) = 0$, 
    and hence $\xi_{f|W_{N}}^{+}(j;u,v) = 0$ for all $0 \le j \le k-2$ and all $(u,v) \in E_{N}$.  
    Therefore $f = 0$.
\end{proof}

Thus, the finiteness of $\APT$ reflects the vanishing of modular symbols under higher-level Atkin–Lehner relations.

Next we show that the projection of $P_{k}(N)$ onto $\mS_{k}^{\mathrm{new}}(\Gamma_{0}(N))$ equals the whole newspace, using that a cusp form orthogonal to products of Eisenstein series must have many twisted $L$-values vanishing.

\begin{theorem}\label{thm4.4}
    Let $P$ and $N$ be as in \cref{thm4.3}. 
    For a subspace $M \subset \mM_{k}(\Gamma_{0}(N))$, write $\overline{M}$ for its orthogonal projection onto $\mS_{k}^{\mathrm{new}}(\Gamma_{0}(N))$. 
    Then for even $k \ge 4$,
    \[
        \overline{P_{k}(N)} \;=\; \mS_{k}^{\mathrm{new}}(\Gamma_{0}(N)).
    \]
    In the case $k=2$, let $\mS_{2,\mathrm{rk}=0}^{\mathrm{new}}(\Gamma_{0}(N)) \subset \mS_{2}^{\mathrm{new}}(\Gamma_{0}(N))$ denote the subspace generated by newforms $f$ with $L(f,1)\neq 0$. Then
    \[
        \overline{P_{2}(N)} \;=\; \mS_{2,\mathrm{rk}=0}^{\mathrm{new}}(\Gamma_{0}(N)).
    \]
\end{theorem}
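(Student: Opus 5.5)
The argument follows Dickson and Neururer's proof of their Theorem~1.2/1.3. The only input that depends on the shape of the level is the vanishing criterion, and \cref{thm4.3} supplies exactly that for levels attached to finite-type $P$. The strategy is by orthogonality. Suppose $f\in\mS_k^{\mathrm{new}}(\Gamma_0(N))$ is orthogonal to $P_k(N)$; for $k=2$, take $f$ orthogonal to $P_2(N)$ and lying in $\mS_{2,\mathrm{rk}=0}^{\mathrm{new}}(\Gamma_0(N))$. We show $f=0$, which gives equality of the projection with the whole space.

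\emph{Step 1: reduction to eigenforms.} The new space decomposes into Hecke eigenspaces that are also eigenspaces for every $W_\ell^N$ with $\ell\mid N'$. The space $P_k(N)$ is stable under these $W_S^N$, since its generators are indexed by all subsets $S$, and the $W_\ell^N$ are self-adjoint. Hence we may decompose $f$ into components that are simultaneous eigenfunctions of the $W_\ell^N$, and each component is again orthogonal to $P_k(N)$.

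\emph{Step 2: Rankin--Selberg unfolding.} For $(\alpha,\ell)\in B(N)$, unfold the Petersson product $\langle f, E_\ell^{\bo,\alpha}E_{k-\ell}^{\bo,\overline{\alpha_N}}\rangle$ against one Eisenstein factor. This expresses it, up to nonzero gamma factors and Gauss sums, as a product of two twisted $L$-values:
\[
L(f_{\alpha},k-1)\,L(f,k-\ell)\quad\text{or a variant such as}\quad L(f_\alpha, j+1)\,L(f,\cdot),
\]
with $j$ tied to $\ell$. This is exactly the computation of \cite{DN}, and it does not use the factorization of $N$. Taking all $S$ and applying the $W_\ell^N$-eigen property, orthogonality gives, for each primitive $\alpha \pmod M$ with $M\mid N$ and each $0\le j\le k-2$ with $\alpha(-1)=(-1)^{j+\epsilon}$, the vanishing of
\[
L(f_\alpha,j+1)\cdot(\text{a fixed untwisted value}).
\]

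\emph{Step 3: removing the untwisted factor.} We must show that the extra untwisted factor is nonzero, so that the twisted values themselves vanish. For $k\ge4$ one chooses $\ell$ so that the untwisted value lies at a point $s$ with $\operatorname{Re}s\ge k/2+1$ on the edge of absolute convergence. There the Euler product shows it is nonzero, and for central-adjacent values one uses the functional equation. Following \cite{DN}, the parity $\epsilon$ is fixed by which untwisted value is available. For $k=2$ the only available value is $L(f,1)$, which is nonzero by the rank-zero hypothesis on each eigencomponent. This is why the $k=2$ statement is restricted to $\mS^{\mathrm{new}}_{2,\mathrm{rk}=0}$.

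\emph{Step 4: conclusion.} With all $L(f_\alpha,j+1)=0$, \cref{thm4.3} applies, since $P$ is of finite type, and gives $f=0$. A pure squarefree level, where $P$ is empty, is covered by \cref{thm4.1} directly.

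The main obstacle is Step 3: checking that for every parity class $\epsilon$ one can choose the Eisenstein pair so that the accompanying untwisted $L$-value is provably nonzero, while avoiding the excluded pairs $(\bo,2)$ and $(\bo,k-2)$. I would copy the case analysis of \cite{DN} verbatim. Their analysis is local in $\alpha$ and $f$ and does not depend on how many primes divide $N$ to a high power, so it transfers unchanged.
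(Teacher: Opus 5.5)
\textbf{There is a genuine gap between your Steps 1 and 2.} After Step 1, your $f$ is only a simultaneous eigenfunction of the $W_\ell^N$, not a Hecke eigenform. The Rankin--Selberg identity factors into a product of two $L$-values only for newforms. Write $f=\sum_i\beta_i f_i$ with newforms $f_i$ sharing the Atkin--Lehner eigenvalues of $f$. Then orthogonality to $P_k(N)$ gives
\[
\sum_{i}\beta_i\,L(f_i,k-1)\,L\bigl((f_i)_\alpha,k-\ell\bigr)=0\qquad\text{for all }(\alpha,\ell)\in B(N).
\]
Here the untwisted factor $L(f_i,k-1)$ varies with $i$, so there is no ``fixed untwisted value'' to cancel. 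Consequently the vanishing $L(f_\alpha,j+1)=0$ you use in Step 4 does not follow, and \cref{thm4.3} cannot be applied to $f$.

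You cannot repair this by passing to Hecke eigencomponents. $P_k(N)$ is not known a priori to be Hecke-stable, so those components need not inherit orthogonality.

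The missing device, used in the paper following \cite{DN}, is to set $G:=\sum_i\beta_i L(f_i,k-1)f_i$. This $G$ has the same $W_\ell^N$-eigenvalues as $f$. The displayed relations say exactly that $L(G_\alpha,k-\ell)=0$ for all $(\alpha,\ell)\in B(N)$, which is the $\epsilon=1$ hypothesis of \cref{thm4.3} for $G$. Hence $G=0$, and linear independence of newforms gives $\beta_i L(f_i,k-1)=0$ for each $i$. It remains to see that $L(f_i,k-1)\neq 0$:
\begin{itemize}
\item for $k\ge4$, by absolute convergence of the Euler product, since $k-1>(k+1)/2$;
\item for $k=2$, by the rank-zero assumption on each $f_i$.
\end{itemize}
Note also that the untwisted value always sits at $s=k-1$, independently of $(\alpha,\ell)$. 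So the choice of $\ell$ in your Step 3 does not arise.

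\textbf{Two smaller points.} First, you misplace the role of the excluded pairs $(\bo,2)$ and $(\bo,k-2)$. They do not constrain the choice of a nonvanishing untwisted factor. Rather, orthogonality gives no information about $L(G,k-2)$ and $L(G,2)$, and both are needed for the $\alpha=\bo$ part of the hypothesis of \cref{thm4.3}. The paper supplies them via \cite[Prop.~3.6]{DN}.

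Second, for $k=2$, showing that no nonzero $f\in\mS^{\mathrm{new}}_{2,\mathrm{rk}=0}(\Gamma_0(N))$ is orthogonal to $P_2(N)$ yields equality only once you also know $\overline{P_2(N)}\subseteq\mS^{\mathrm{new}}_{2,\mathrm{rk}=0}(\Gamma_0(N))$. That inclusion follows from the same Rankin--Selberg formula, because a newform with $L(f,1)=0$ is orthogonal to every generator of $P_2(N)$.
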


\begin{proof}
The argument is the same as in \cite[Thm.~4.4]{DN}, with \cref{thm4.3} in place of \cite{DN}'s two–prime-power hypothesis. 
Assume for contradiction that $\overline{P_{k}(N)}$ is a proper subspace of $\mS_{k}^{\mathrm{new}}(\Gamma_{0}(N))$. 
Then there exists a non-zero form $g \in \mS_{k}^{\mathrm{new}}(\Gamma_{0}(N))$ that is orthogonal to $P_{k}(N)$ and an eigenform of the $W_{S}^{N}$. We can write
\[
    g \;=\; \sum_{i=1}^{r} \beta_i\, f_i,
\]
where $f_1,\dots,f_r$ are newforms in $\mS_{k}^{\mathrm{new}}(\Gamma_{0}(N))$ having the same $W_S^{N}$–eigenvalues as $g$ for every $S\subset P$.

By the Rankin–Selberg method (cf. \cite{DN}), for each primitive $\alpha \bmod M$ with $M\mid N$ and each $1\le \ell \le k-1$ with $\alpha(-1)=(-1)^{\ell}$ one has
\[
    \big\langle E_{\ell}^{\bo,\alpha}\,E_{k-\ell}^{\bo,\overline{\alpha_N}},\ f_i \big\rangle
    \;=\; C_{k,\ell,N,\alpha}\,
          L(f_i,k-1)\,L\big( (f_i)_{\alpha},\,k-\ell \big),
\]
where $C_{k,\ell,N,\alpha}\neq 0$ is the explicit constant from \cite{DN}. 
Since each $W_S^{N}$ is self–adjoint and $g$ shares the same $W_S^{N}$–eigenvalues as the $f_i$, we have
\[
    \big\langle E_{\ell}^{\bo,\alpha}\,E_{k-\ell}^{\bo,\overline{\alpha_N}}\big|W_S^{N},\ g \big\rangle = 0
    \iff
    \big\langle E_{\ell}^{\bo,\alpha}\,E_{k-\ell}^{\bo,\overline{\alpha_N}},\ g \big\rangle = 0
\]
for all $S\subset P$. 
Thus $g \perp P_k(N)$ is equivalent to
\begin{equation}\label{(4.4)}
    \sum_{i=1}^{r} \beta_i\, L(f_i,k-1)\, L\big((f_i)_{\alpha},\,k-\ell\big) \;=\; 0
\end{equation}
for all $(\alpha,\ell)\in B(N)$. Define
\[
    G \;:=\; \sum_{i=1}^{r} \beta_i\, L(f_i,k-1)\, f_i \;\in\; \mS_{k}^{\mathrm{new}}(\Gamma_{0}(N)).
\]
Then $G$ has the same $W_S^{N}$–eigenvalues as $g$ (and the $f_i$) for every $S\subset P$. 
(\ref{(4.4)}) is precisely
\[
    L\big(G_{\alpha},\,k-\ell\big) \;=\; 0 \qquad \text{for all } (\alpha,\ell)\in B(N).
\]
By \cite[Prop.~3.6]{DN} this implies $L(G,2)=L(G,k-2)=0$, and hence $G$ satisfies the hypothesis of \cref{thm4.3}. 
Therefore $G=0$. 
Since the functional equation shows that $L(f_i,k-1)\ne0$ for $k\ge4$, we obtain $\beta_i=0$ for all $i$, a contradiction.
The case $k=2$ is analogous, yielding the stated subspace $\mS_{2,\mathrm{rk}=0}^{\mathrm{new}}(\Gamma_{0}(N))$ (cf. \cite[Thm.~4.4]{DN}).
\end{proof}

(Proof of \cref{thm1.6})\\
Recall that the space $Q_k(N)$ is defined as in \cite{DN}, 
being generated by products of two Eisenstein series of the form 
$E_{\ell}^{\phi,\psi}|B_{d_1}\,E_{k-\ell}^{\bar{\phi},\bar{\psi}}|B_{d_2}$ 
with $(\phi,\psi,\ell,d_1,d_2)$ satisfying the conditions described therein.  
As shown in \cite[§5]{DN}, $Q_k(N)$ and $P_k(N)$ have the same projection onto 
the new subspace $\mS_k^{\mathrm{new}}(N)$.  
Combining this with the standard decompositions
\[
    \mS_k(\Gamma_{0}(N))
    = \bigoplus_{N_0\mid N}\,\bigoplus_{d\mid N/N_0}\mS_k^{\mathrm{new}}(\Gamma_{0}(N_0))|B_d,
    \quad
    \mS_{2,\mathrm{rk}=0}(\Gamma_{0}(N))
    = \bigoplus_{N_0\mid N}\,\bigoplus_{d\mid N/N_0}
       \mS_{2,\mathrm{rk}=0}^{\mathrm{new}}(\Gamma_{0}(N_0))|B_d,
\]
we obtain the identities (\ref{(1.1)}) and (\ref{(1.2)}).  
This completes the proof of \cref{thm1.6}. 
\hfill$\square$\\

From \Cref{main,thm1.6}, we obtain \cref{cor1.7} immediately.  
As these results indicate, our method cannot prove that (\ref{(1.1)}) and (\ref{(1.2)}) hold for all $N \in \mathbb{N}$.  
The following table summarizes and compares the results of Dickson–Neururer and the present work.

\renewcommand{\arraystretch}{1.1}
\begin{table}[h]
    \centering
    \caption{Comparison of results}
    \begin{tabular}{|c|c|c|}
    \hline
     & Dickson–Neururer & Fujiyoshi \\
    \hline\hline
    $(\clubsuit)$ & $2$ & $4$ \\
    \hline
    $(\diamondsuit)$ & $2$ & $6$ \\
    \hline
    $(\heartsuit)$ & $30^2$ & $3990^2$ \\
    \hline
    $(\spadesuit)$ & $30^2$ & $510510^2$ \\
    \hline
    $\mB(N)$ & $\geq 0.9966$ & $\geq 0.9999999125$ \\
    \hline
    \end{tabular}
\end{table}

\newpage
\noindent
\noindent\textbf{Explanation.}

\begin{tabular}{@{}lp{13cm}@{}}
$(\clubsuit)$ & Dickson--Neururer proved that all subsets $P \subset \Pri$ with $|P| \le 2$ are of finite type, while in this work we extend this to $|P| \le 4$.
\\[3pt]
$(\diamondsuit)$ & Dickson--Neururer classified, for all subsets $P \subset \Pri$ with $|P| \le 2$, whether each $P$ is of finite or infinite type, whereas our classification is complete up to $|P| \le 6$. \\[3pt]
$(\heartsuit)$ & The largest integer $N$ such that (\ref{(1.1)}) and (\ref{(1.2)}) have been verified for all $M < N$. \\[3pt]
$(\spadesuit)$ & The maximal value of $N$ up to which the decomposition formulas have been computationally verified by the APT-based method. \\[3pt]
\end{tabular}

\section{Open problems}

In this section, we introduce some open problems concerning primitive sets.
Using the method developed in this paper, all primitive sets with $|P|\le 6$
have been completely classified.
At present, a total of $1095$ primitive sets have been found by our computational search;
however, it remains unknown whether there exist infinitely many primitive sets.
Our computation shows that
\[
L_{2}=1,\quad L_{3}=3,\quad L_{4}=8,\quad
L_{5}=26,\quad L_{6}=96.
\]
It is natural to ask how $L_{r}$ grows as $r$ increases.

Moreover, by \cref{thm1.6}, if $P$ is of finite type, then for
\[
N = N'\prod_{p\in P} p^{a_{p}},\  \text{with $N'$ squarefree and } a_{p}\ge 2?
\]
the identities (\ref{(1.1)}) and (\ref{(1.2)}) hold.  However, the converse is
not known: from the validity of (\ref{(1.1)}) and (\ref{(1.2)}) we cannot
conclude that $P$ is of finite type.  In particular, our tree-based method does
not allow us to decide whether these identities remain true when $P$ is of
infinite type. Motivated by these considerations, we propose the following open problems.

\begin{open}
    \leavevmode
    \begin{enumerate}
        \item[(1)] How does $L_{r}$ grow as $r\to\infty$?
        \item[(2)] How many elements does $\Prim_{r}$ have for integers $r \geq 7$?
        \item[(3)] Does there exist infinitely many primitive sets?
        \item[(4)] Let $P$ be of infinite type. 
                  Do (\ref{(1.1)}) and (\ref{(1.2)}) hold for 
                  \[
                  N = N'\prod_{p\in P} p^{a_{p}},\  \text{with $N'$ squarefree and } a_{p}\ge 2?
                  \]
    \end{enumerate}
\end{open}

\section*{Acknowledgement}
The author wishes to express his sincere gratitude to Professor Yasuro Gon for his invaluable guidance and encouragement throughout the preparation of this paper.

\section*{Appendix}\label{append}

This is the list of all primitive sets $P$ with $|P| \le 6$
and the corresponding initial pairs $(x,y)$ such that
$|\APT_{P}(x,y)| = \infty$.
Other primitive sets found in our computations
are listed in \texttt{additive\_prime\_tree/function/primitive\_sets.py}.

\begin{table}[h]
    \centering
    \caption{List of all primitive sets $P$ with $|P| \leq 6$}
    \label{prim}
    \begin{minipage}{0.45\linewidth}
        \centering
        \begin{tabular}{|c|c|}
            \hline
            $P$ & Initial pair \\
            \hline\hline
            $\{2, 3, 5, 7, 19\}$ & $(2, 321)$ \\
            \hline
            $\{2, 3, 5, 13, 17\}$ & $(2, 243)$ \\
            \hline
            $\{2, 3, 5, 7, 11, 13\}$ & $(2, 201)$ \\
            \hline
            $\{2, 3, 5, 7, 11, 23\}$ & $(2, 1645)$ \\
            \hline
            $\{2, 3, 5, 7, 13, 29\}$ & $(2, 789)$ \\
            \hline
            $\{2, 3, 5, 11, 13, 23\}$ & $(2, 3601)$ \\
            \hline
            $\{2, 3, 7, 11, 13, 23\}$ & $(2, 1251)$ \\
            \hline
            $\{2, 3, 5, 11, 19, 31\}$ & $(2, 11643)$ \\
            \hline
            $\{2, 3, 5, 7, 29, 41\}$ & $(2, 6325)$ \\
            \hline
            $\{2, 3, 5, 11, 13, 97\}$ & $(2, 1645)$ \\
            \hline
            $\{2, 5, 7, 11, 13, 43\}$ & $(2, 10305)$ \\
            \hline
            $\{2, 3, 5, 7, 37, 59\}$ & $(2, 48711)$ \\
            \hline
            $\{2, 3, 5, 11, 29, 53\}$ & $(2, 12205)$ \\
            \hline
            $\{2, 3, 5, 7, 11, 223\}$ & $(2, 11665)$ \\
            \hline
            $\{2, 3, 5, 17, 19, 89\}$ & $(2, 20499)$ \\
            \hline
        \end{tabular}
    \end{minipage}
    \hspace{2em}
    \begin{minipage}{0.45\linewidth}
        \centering
        \begin{tabular}{|c|c|}
            \hline
            $P$ & Initial pair \\
            \hline\hline
            $\{2, 3, 5, 7, 41, 103\}$ & $(2, 16171)$ \\
            \hline
            $\{2, 3, 7, 17, 31, 41\}$ & $(2, 70339)$ \\
            \hline
            $\{2, 3, 5, 11, 31, 107\}$ & $(2, 3955)$ \\
            \hline
            $\{2, 3, 5, 11, 59, 61\}$ & $(2, 151113)$ \\
            \hline
            $\{2, 3, 5, 23, 31, 61\}$ & $(2, 83313)$ \\
            \hline
            $\{2, 3, 5, 7, 97, 113\}$ & $(2, 106555)$ \\
            \hline
            $\{2, 3, 5, 17, 67, 79\}$ & $(2, 25201)$ \\
            \hline
            $\{2, 3, 5, 7, 11, 1499\}$ & $(2, 9493)$ \\
            \hline
            $\{2, 3, 5, 29, 41, 103\}$ & $(2, 60801)$ \\
            \hline
            $\{2, 5, 7, 31, 37, 59\}$ & $(2, 99491)$ \\
            \hline
            $\{2, 3, 5, 7, 149, 229\}$ & $(2, 235945)$ \\
            \hline
            $\{2, 3, 5, 67, 71, 103\}$ & $(2, 1635055)$ \\
            \hline
            $\{2, 3, 29, 41, 43, 53\}$ & $(2, 637041)$ \\
            \hline
            $\{2, 3, 5, 101, 257, 337\}$ & $(2, 8024751)$ \\
            \hline
        \end{tabular}
    \end{minipage}
\end{table}

\newpage


\end{document}